\pgfplotsset{compat=1.18}
\newcommand{\ie}{{i.e.}}
\newcommand{\eg}{{e.g.}}
\newcommand{\area}{{\rm Area}}
\newcommand{\diam}{{\rm diam}}
\newtheorem{problem}[theorem]{Problem}
\def\P{\mathcal P}
\def\V{\mathcal V}
\let\oldnl\nl
\newcommand{\nonl}{\renewcommand{\nl}{\let\nl\oldnl}}
\def\TitleOfAlgo{\@ifnextchar({\@TitleOfAlgoAndComment}{\@TitleOfAlgoNoComment}}
\def\@TitleOfAlgoAndComment(#1)#2{\nonl\hspace*{-1.5em}#2 #1\;}
\def\@TitleOfAlgoNoComment#1{\nonl\hspace*{-1.5em}#1\;}
\newcommand{\later}[1]{}
\newcommand{\old}[1]{}
\title{Covering Complete Geometric Graphs by Monotone Paths}
\titlerunning{Covering Complete Geometric Graphs by Monotone Paths}
\author{Adrian Dumitrescu}
{Algoresearch L.L.C., Milwaukee, WI, USA}
{ad.dumitrescu@algoresearch.org}
{0000-0002-1118-0321}{}
\author{J\'anos Pach}
{Alfr\'ed R\'enyi Institute of Mathematics, Budapest, Hungary}
{pach@renyi.hu}
{0000-0002-2389-2035}
{}
\author{Morteza Saghafian}
{IST Austria (Institute of Science and Technology Austria), Kloster\-neu\-burg, Austria}
{morteza.saghafian@ist.ac.at}
{0000-0002-4201-5775}
{}
\author{Alex Scott}
{Mathematical Institute, University of Oxford, UK}
{scott@maths.ox.ac.uk}
{0000-0003-4489-5988}
{}
\authorrunning{Adrian Dumitrescu, J\'anos Pach, Morteza Saghafian, and Alex Scott}
\keywords{convexity, geometric graph, complete graph, crossing family, plane subgraph}
\begin{document}

\maketitle

\begin{abstract}
  Given a set $A$ of $n$ points (vertices) in general position in the plane, the
  \emph{complete geometric graph} $K_n[A]$ 
  consists of all $\binom{n}{2}$ segments (edges) between the elements of $A$.
  It is known that the edge set of every complete geometric graph on $n$
  vertices can be partitioned into $O(n^{3/2})$ crossing-free paths (or matchings).
  We strengthen this result under various additional assumptions on the  point set.
  In particular, we prove that for a set $A$ of $n$ \emph{randomly} selected points,
  uniformly distributed in $[0,1]^2$, with probability tending to $1$ as $n\rightarrow\infty$,
  the edge set of $K_n[A]$ can be covered by $O(n\log n)$ crossing-free paths and by 
  $O(n\sqrt{\log n})$ crossing-free matchings.
On the other hand, we construct $n$-element point sets such that covering the edge set of $K_n[A]$ requires a quadratic number of monotone paths.  
\end{abstract}

\section{Introduction} \label{sec:intro}

A set of points in the plane is said to be
(i)~in \emph{general position} if no $3$ points are collinear; and
(ii)~in \emph{convex position} if every point in the set is an extreme point of the convex hull.
Given $n$ points in general position in the plane, the graph obtained 
by connecting certain point-pairs by straight-line segments is called a \emph{geometric graph} $G$.
If no two segments (edges) of $G$ cross each other, then $G$ is said to be
\emph{crossing-free} or a \emph{plane graph}. 

In $2005$, Araujo, Dumitrescu, Hurtado, Noy, and Urrutia~\cite{ADH+05} asked the following question:
Can the edge set of every complete geometric graph on $n$ vertices be partitioned into a small number
of crossing-free matchings? In particular, they asked:

\begin{problem} \label{problem:1}
  Does there exist a constant $c>0$ such that every complete geometric graph on $n$ vertices
  can be partitioned into at most $c n$ plane matchings?
\end{problem}

It is easy to verify that when $n$ is even (respectively odd), every complete
geometric graph of $n$ vertices in convex position 
can be decomposed into $n-1$ (respectively $n$) perfect plane matchings.
On the other hand, the best known upper bound for point sets in general position is only
$O(n^{3/2})$~\cite{ADH+05}, which seems to be far off.

Given a point-set $A$, let $K_n[A]$ denote the complete geometric graph induced by $A$.
We prove the following.

\begin{theorem} \label{thm:random}
  Let $A$ be a set of $n$ random points uniformly distributed in $[0,1]^2$, and let $n\rightarrow\infty$.  
  Then, with probability tending to $1$, the edge set of $K_n[A]$ can be covered by at most
  $O(n\log n)$ crossing-free paths, and by $O(n\sqrt{\log n})$ crossing-free matchings. 
\end{theorem}

This is better than the $O(n^{3/2})$ bound in~\cite{ADH+05},
which holds for every point set in general position.  The proof in fact gives the stronger result 
that there is a covering by $O(n\log n)$ monotone paths.

\begin{definition}
A polygonal path $\xi=(v_1,v_2,\ldots, v_t)$ in $\mathbb{R}^2$ is
\emph{monotone in direction $\mathbf{u}\in \mathbb{R}^2\setminus \{\mathbf{0}\}$}
(or is $\mathbf{u}$-\emph{monotone})
if every directed edge of $\xi$ has a positive inner product with $\mathbf{u}$, that is,
$\langle \overrightarrow{v_iv_{i+1}},\mathbf{u}\rangle>0$ for $i=1,\ldots, t-1$;
here $\mathbf{0}$ is the origin.
A path $\xi=(v_1,v_2,\ldots, v_t)$ is \emph{monotone}
if it is monotone in some direction $\mathbf{u}\in \mathbb{R}^2\setminus \{\mathbf{0}\}$. 
Obviously, every monotone path is crossing-free.

A matching is said to be \emph{monotone in direction $\mathbf{u}\in \mathbb{R}^2\setminus \{\mathbf{0}\}$}
if the edge projections onto $\mathbf{u}$ are disjoint intervals.
A matching is said to be \emph{monotone}
if it is monotone in some direction $\mathbf{u}\in \mathbb{R}^2\setminus \{\mathbf{0}\}$.
Equivalently, a matching is monotone if its edge set is the subset of the edge set of a monotone path.

For any constant $\alpha>0$, a set of $n$ points is \emph{$\alpha$-dense} if the ratio
between the longest and the shortest distance between two points in the set is at most $\alpha \sqrt{n}$.
\end{definition}

For dense sets, the $O(n^{3/2})$ upper bound~\cite{ADH+05} for decompositions into
plane matchings can be strengthened by requiring that the paths are \emph{monotone}.

\begin{theorem}\label{thm:dense}
  Let $A$ be an $\alpha$-dense point set in general position in the plane. 
  Then the edge set of $K_n[A]$ can be covered by $O(n^{3/2})$ {monotone} paths, 
  and such a covering can be computed in $O(n^2)$ time.
  Consequently, the edge set of $K_n[A]$ can be decomposed into $O(n^{3/2})$ \emph{monotone} matchings. 
\end{theorem}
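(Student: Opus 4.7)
The plan is to combine two ingredients: a family of $\Theta(\sqrt{n})$ carefully chosen sweep directions, and a canonical decomposition of each sorted sequence into short ``step-$d$'' monotone paths. After rescaling $A$ so that its minimum interpoint distance equals $1$, the $\alpha$-density hypothesis forces the diameter to be at most $\alpha\sqrt{n}$, and a standard packing argument (disjoint disks of radius $1/2$ around each point) yields the crucial quantitative property that every planar region of area $R$ contains at most $O(R)$ points of $A$. I would then fix $K=\lceil\sqrt{n}\rceil$ unit vectors $\mathbf{u}_1,\ldots,\mathbf{u}_K$ whose directions are equally spaced in $[0,\pi)$, slightly perturbed so that for every $i$ all $n$ projections of $A$ onto $\mathbf{u}_i$ are distinct; this is possible since $A$ is in general position.

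For each $\mathbf{u}_i$, I sort the points as $q_1^{(i)},\ldots,q_n^{(i)}$ by increasing $\mathbf{u}_i$-projection, and for each ``step'' $d\in\{1,\ldots,\lceil C\sqrt{n}\rceil\}$ (where $C=C(\alpha)$ is a constant fixed below) I consider the graph whose edges are $\{q_j^{(i)},q_{j+d}^{(i)}\}$ for $1\le j\le n-d$. Every vertex has degree at most $2$ and each edge is oriented forward in the sorted order, so this graph decomposes into $\min(d,n-d)\le d$ vertex-disjoint paths, each of which visits its vertices with strictly increasing $\mathbf{u}_i$-projection and is therefore $\mathbf{u}_i$-monotone (and hence noncrossing). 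Summing gives $\sum_{d=1}^{\lceil C\sqrt{n}\rceil}d=O(n)$ monotone paths per direction, and $O(n^{3/2})$ paths in total; the running time is $O(n\log n)$ for sorting plus $O(n^{3/2})$ for listing the edges per direction, totalling $O(n^2)$.

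The hard part is showing that every edge $e=\{a,b\}$ of $K_n[A]$ actually occurs as a step-$d$ edge for some $d\le C\sqrt{n}$ in some direction $\mathbf{u}_i$. Given $e$, I would pick the $\mathbf{u}_i$ whose angle lies within $\pi/(2K)=O(1/\sqrt{n})$ of the direction perpendicular to $b-a$; then $|\langle\mathbf{u}_i,b-a\rangle|=O(|b-a|/\sqrt{n})$, so the $\mathbf{u}_i$-projections of $a$ and $b$ differ by that much. Any point $c\in A$ whose $\mathbf{u}_i$-projection lies strictly between those of $a$ and $b$ must lie in a strip perpendicular to $\mathbf{u}_i$ of width $O(|b-a|/\sqrt{n})$ and length at most the diameter $O(\sqrt{n})$; this strip has area $O(|b-a|)=O(\sqrt{n})$ and, by the packing bound, contains only $O(\sqrt{n})$ points of $A$. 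Hence the indices of $a$ and $b$ in the $\mathbf{u}_i$-sorted order differ by some $d\le C(\alpha)\sqrt{n}$, so $e$ is precisely a step-$d$ edge in direction $\mathbf{u}_i$ and is included in our family. Finally, splitting each monotone path into its odd- and even-indexed edges produces two monotone matchings per path; discarding duplicated edges arbitrarily yields the claimed partition of $K_n[A]$ into $O(n^{3/2})$ monotone matchings.
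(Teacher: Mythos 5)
Your proposal is correct and follows essentially the same route as the paper: $\Theta(\sqrt{n})$ equally spaced directions, arithmetic-progression (step-$d$) subpaths of each sorted order for all $d=O(\sqrt{n})$, and a disk-packing argument showing that the two endpoints of any edge are at most $O(\sqrt{n})$ apart in the order given by the nearly perpendicular direction. The only nitpick is that the blanket claim ``a region of area $R$ contains $O(R)$ points'' fails for very thin regions; one must apply the packing bound to the strip inflated by half the minimum distance, which still has area $O(\sqrt{n})$ here (exactly as in the paper's $k\pi r^2\le 4(\varphi+r)$ estimate), so your conclusion stands.
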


From the opposite direction, we suspect that covering the edge set of the complete geometric graph
of every dense point set requires a superlinear number of monotone paths.

\begin{conjecture}\label{conj:dense-lb}
  For some (possibly, for every) dense $n$-element point set $A$ in general position in the plane,
  covering the edge set of $K_n[A]$ requires a superlinear number of monotone paths.
\end{conjecture}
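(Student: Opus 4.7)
The plan is to take a concrete dense point set, specifically a generic perturbation $A$ of the $\sqrt n\times\sqrt n$ integer grid, which is $O(1)$-dense with $\diam(A)=\Theta(\sqrt n)$, and to derive the superlinear lower bound by bookkeeping on the total $\mathbf u$-projection of a monotone path.

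The central quantitative tool is the following \emph{span bound}: for any $\mathbf u$-monotone path $P=(v_1,\dots,v_t)$,
\[
\sum_{i=1}^{t-1}\bigl\langle v_{i+1}-v_i,\,\mathbf u\bigr\rangle=\bigl\langle v_t-v_1,\,\mathbf u\bigr\rangle\;\le\;\diam_{\mathbf u}(A)=O(\sqrt n),
\]
and every summand on the left is strictly positive. Hence, for any constant $c>0$, the path $P$ uses at most $O(1/c)$ edges whose $\mathbf u$-projection is at least $c\sqrt n$. In particular, $P$ contains only $O(1)$ edges of Euclidean length $\Omega(\sqrt n)$ whose direction lies within $60^\circ$ of $\mathbf u$; I will call such edges \emph{long and well-aligned} with $\mathbf u$.

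Next I would count long and well-aligned pairs. The grid has $\Theta(n^2)$ edges of length $\ge \sqrt n/2$; after partitioning the circle of directions into $M$ equal angular buckets, each bucket holds $\Theta(n^2/M)$ such long edges, and two edges in the same bucket differ in direction by at most $O(1/M)$. For a monotone path whose direction $\mathbf u$ lies in bucket $B$, the span bound forces it to contain only $O(1)$ long edges whose direction also lies in $B$. A double-counting argument that charges each long edge to a covering path in a bucket matching the edge's own direction would then give a lower bound of $\Omega(n^2/M)$ on the number of paths, far exceeding the conjecture.

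The \textbf{main obstacle}, and the point at which this naive argument fails, is the \emph{bucket-matching} step. A long edge may well be covered by a monotone path whose direction is nearly perpendicular to it; in that regime the $\mathbf u$-projection of the edge is tiny and the span bound is vacuous, so arbitrarily many such long edges could a priori pile up on a single path. To close the gap, one needs a complementary constraint controlling the number of \emph{near-perpendicular} long edges on any single monotone path. A plausible candidate is to use the fact that such a path must zig-zag widely in the $\mathbf u^{\perp}$-direction while advancing only $O(\sqrt n)$ in $\mathbf u$; this could be quantified through pigeonholing over the $O(\sqrt n)$ $\mathbf u^\perp$-slabs of the grid, or through a longest-monotone-subsequence bound applied uniformly over all directions. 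I view this coupling between the two orthogonal directions as the core difficulty; a successful implementation would, optimistically, produce an $\Omega(n^{3/2})$ lower bound matching the upper bound of Theorem~\ref{thm:dense}.
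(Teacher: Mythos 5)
This statement is Conjecture~\ref{conj:dense-lb}: the paper does not prove it and explicitly leaves it open (``we suspect that\ldots''), so there is no proof of the authors to compare yours against, and a complete argument here would be a new result. Your proposal does not supply one. The gap you flag yourself is genuine and, as the argument stands, fatal. The telescoping bound $\sum_i \langle v_{i+1}-v_i,\mathbf u\rangle \le \diam_{\mathbf u}(A)=O(\sqrt n)$ constrains only those edges that are reasonably aligned with the direction of monotonicity $\mathbf u$, and for dense sets these are the exception rather than the rule: in a $\mathbf v$-monotone spanning path consecutive vertices are consecutive in the $\mathbf v$-order, so its edges are typically nearly \emph{perpendicular} to $\mathbf v$ and can nevertheless be long. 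Concretely, for a generic perturbation of the $\sqrt n\times\sqrt n$ grid, the single $x$-monotone spanning path already contains $\Theta(n)$ edges of length $\Theta(\sqrt n)$, since consecutive points in the perturbed $x$-order within one column are typically $\Theta(\sqrt n)$ apart vertically. Hence any charging scheme whose only input is ``few long edges per path'' cannot certify more than $\Omega(n^2/n)=\Omega(n)$ paths, i.e., a linear bound --- no better than the trivial bound obtained from the fact that each path has at most $n-1$ edges. Your own upper-bound count of $\Theta(n^2)$ long edges divided by the \emph{correct} per-path capacity of $\Theta(n)$ (rather than the hoped-for $O(1)$) collapses to linear.

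The repairs you gesture at (pigeonholing over $\mathbf u^\perp$-slabs, a uniform longest-monotone-subsequence bound) are not developed, and there is reason to doubt they suffice: the constructions behind Theorems~\ref{thm:random} and~\ref{thm:dense} cover an edge $ab$ precisely by a path monotone in a direction nearly \emph{orthogonal} to $ab$ (this is what the emptiness of the bowtie region $R(a,b)$ buys), so near-perpendicular coverage of long edges is not a pathology but the dominant and intended mechanism; for the dense grid, $\Theta(n^{3/2})$ monotone paths genuinely do cover everything. A successful lower bound therefore has to exploit more than edge lengths and projection budgets. Your identification of the near-perpendicular regime as the core difficulty is accurate, but the conjecture remains unproven by this proposal.
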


In contrast to random or dense point sets, arbitrary point sets require a quadratic number
of monotone paths in the worst-case.

\begin{theorem}\label{thm:arbitrary}
  Let $A$ be a set of $n$ points in general position in the plane.
  Then the edge set of the complete geometric graph $K_n[A]$ can be covered by $n^2/6 + O(n)$ monotone paths. 
  On the other hand, there exist $n$-element point sets that require at least $n^2/15$ monotone paths.
\end{theorem}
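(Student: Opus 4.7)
\medskip\noindent\textbf{Upper bound.} After a generic rotation, I would sort $A$ by $x$-coordinate and relabel the points $p_1,\dots,p_n$ accordingly; every path visiting vertices in index-increasing order is then $x$-monotone. This reduces the task to covering the edges of the combinatorial $K_n$ on $\{1,\dots,n\}$ by \emph{increasing} paths. The plan is to use roughly $\binom{n}{2}/3=n^2/6$ increasing $3$-edge paths of the form $(a,b,c,d)$ with $a<b<c<d$ (each covering three edges), plus $O(n)$ singleton paths. I would classify an edge $(i,j)$ by its length $d=j-i$: call it \emph{long} if $d>n/3$ (about $2n^2/9$ edges) and \emph{short} otherwise (about $5n^2/18$ edges). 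Since the three edge lengths of a $3$-edge increasing path must sum to at most $n-1$, any such path contains at most two long edges.

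I would then use two kinds of $3$-edge paths: \emph{long-pair paths} $(i_1,j_1,i_2,j_2)$ in which the first and last edges are long (so the middle edge $(j_1,i_2)$ is forced to be short), and \emph{long-middle paths} $(a,i,j,b)$ in which a single long edge is the middle and the two connectors $(a,i),(j,b)$ are short. The long edges with $j<2n/3$ or $i>n/3$ admit compatible partners $j_1<i_2$ and are paired off into $\approx n^2/18$ long-pair paths; the ``crossing'' long edges (those with $i\le n/3$ and $j\ge 2n/3$) cannot be paired in this way and are instead placed as middles of $\approx n^2/9$ long-middle paths. A careful bookkeeping makes each long edge appear in exactly one path and each short edge serve exactly once either as a middle or as a connector, yielding $n^2/9+n^2/18=n^2/6$ paths in total; $O(n)$ additional singleton paths handle the boundary-adjacent edges that do not fit this scheme (e.g., the edges $(1,n-1),(1,n),(2,n)$, which cannot lie in any $3$-edge increasing path). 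I expect the main technical step to be the explicit construction of the long-edge matching and the consistent short-edge assignment.

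\medskip\noindent\textbf{Lower bound.} For the lower bound I would exhibit a specific $n$-point set $A$ in general position and apply LP duality: assign nonnegative weights $w_e$ on the edges of $K_n[A]$ such that $\sum_{e\in\pi}w_e\le 1$ for every monotone path $\pi$ and $\sum_e w_e\ge n^2/15$. Any monotone-path cover then has size at least $\sum_e w_e\ge n^2/15$. The main obstacle is the simultaneous design of the configuration and the weight function. A natural candidate places the points in several tiny clusters whose internal edge orientations are tuned so that edges from different clusters cannot coexist in a common monotone direction, and then concentrates the weights on these ``restrictive'' edges. The per-path constraint $\sum_{e\in\pi}w_e\le 1$ would follow from a geometric argument that any single monotone direction $\mathbf{u}$ admits only a limited total weight of compatible edges, while the total weight bound $\sum_e w_e\ge n^2/15$ reduces to a direct count on the configuration. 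I expect this construction step to be the chief difficulty of the proof; once the configuration and weights are fixed, the duality calculation should be routine.
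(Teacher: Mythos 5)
The paper obtains $n^2/6+O(n)$ by packing $\binom{n}{2}/\binom{6}{2}-O(n)$ edge-disjoint copies of $K_6$ into $K_n$ (Wilson's theorem) and then checking, for each of the $16$ order types of six points in general position, that the corresponding $K_6[A]$ decomposes into five $3$-edge zig-zag (hence monotone) paths. Crucially, the five zig-zag paths inside a single $K_6$ use \emph{five different} directions of monotonicity.

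Your reduction is fundamentally different: you fix \emph{one} generic direction, sort by $x$, and try to cover the abstract $K_n$ on $\{1,\dots,n\}$ by \emph{increasing} (index-monotone) paths. This single-direction restriction cannot reach $n^2/6+O(n)$; there is a $\Theta(n^2)$ gap that no amount of clever bookkeeping can close. To see this, note that for any covering by increasing paths one may assume, after trimming a common prefix when two paths share the same first edge, that at most $n-u$ paths start at each vertex $u$, and a path starting at $u$ has total edge length at most $n-u$. Since the sum of edge lengths $v-u$ over all $\binom{n}{2}$ edges is $\frac{n^3-n}{6}$, the number $k$ of paths satisfies the linear program
\[
\text{minimize } \sum_u x_u \quad\text{subject to}\quad \sum_u (n-u)\,x_u \ \ge\ \frac{n^3-n}{6}, \qquad 0\le x_u\le n-u,
\]
whose optimum is $\bigl(\tfrac12-\tfrac1{2^{5/3}}-o(1)\bigr)n^2\approx 0.185\,n^2 > n^2/6\approx 0.167\,n^2$. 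So even allowing increasing paths of arbitrary length, one direction needs strictly more than $n^2/6+O(n)$ paths. Your concrete three-threshold scheme has, in addition, local flaws: short edges $(u,v)$ with $u\in(n/3,2n/3)<v$ (or symmetrically $u<n/3<v\le 2n/3$) are neither valid middles of long-pair paths nor valid connectors of long-middle paths, and there are only $\approx n^2/18$ short edges inside $[1,n/3]$ available as left connectors while the $\approx n^2/9$ crossing long edges each demand one; but these are secondary to the single-direction obstruction above. A correct proof must mix directions, as the paper's $K_6$ decomposition does.

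\textbf{Lower bound.}
Your LP-duality framing is sound in principle and generalizes the paper's argument (which is the primal--dual certificate with weight $1$ on the $3k^2=n^2/3$ inter-cluster edges of a three-cluster configuration with $\diam(C)\ll\diam(B)\ll\diam(A)\ll 1$, and the per-path bound that any monotone path uses at most five inter-cluster edges). However, you do not actually exhibit the configuration, the weights, or the per-path bound; all three are the substance of the proof and are left as ``expected difficulties.'' As written, the lower bound is a plan, not an argument.
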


Monotone paths are ubiquitous and have been studied extensively over the years,
particularly in the field of optimization~\cite{DRT13}. On the other hand,
crossing-free paths and matchings have been an attraction for geometers~\cite{KPT97,KPTV98,KLN91}.
Note that every monotone path is crossing-free. Clearly, by taking the odd, resp. even numbered edges, every monotone path decomposes into two monotone matchings. 

The rest of this paper is organized as follows. In Sections~2, 3, and 4, we prove Theorems~2, 3, and~5,
respectively. The last section contains some concluding remarks and open problems.  
All point sets appearing in this paper are assumed to be in general position, and the logarithms are in base~$2$.

\section{Proof of Theorem~\ref{thm:random}} 

Let $A=a_1,\ldots,a_n$ be a random sequence of $n$ points, independently and uniformly distributed in $U=[0,1]^2$.
For any unit vector $\mathbf{v} \in {\mathbb R}^2$, we can sort $A$ according to the projections
 $\langle a_i, \mathbf{v}\rangle$.
Observe that sorting with respect to $\mathbf{v}$ and $-\mathbf{v}$ are equivalent. 
Let $\theta=\alpha(\log n) / n$ be a small angle, where $\alpha>0$ is a suitable small constant,
so that $N=\pi/\theta$ is an integer.
Let $\V$ be a set of $N$ unit vectors evenly spaced in $[0,\pi)$ so that $(1,0) \in \V$.  

Consider the following procedure for covering the edge set of $K_n[A]$ by (i) monotone paths
and (ii) crossing-free matchings. For each vector $\mathbf{v} \in \V$, 
sort the elements of $A$ according to their projection on $\mathbf{v}$, and label them in increasing order as  $a_1^\mathbf{v},\dots,a_n^\mathbf{v}$. Now proceed as follows:

\begin{itemize}
\item[$\bullet$] To produce a collection of monotone paths: for each $\mathbf{v} \in \V$, and $1\le i \le j\le \beta\log n$,
we consider the sequence $(a_k^\mathbf{v}:k\equiv i\bmod j)$. This defines a monotone path for each choice of $i$, $j$ and
$\mathbf{v}$.
\item[$\bullet$] To produce a collection of non-crossing matchings: we partition $A$ into 
$t:=\lceil n/(\beta\log n) \rceil$ intervals $B_1,B_2\dots B_t$ of size at most $\lceil\beta\log n\rceil$. 
For each set of the form $B_i\cup B_{i+1}$, by the $O(n^{3/2})$ aforementioned bound in~\cite{ADH+05},
we can cover the complete graph on  $B_i\cup B_{i+1}$ with at most $(2\lceil\beta\log n\rceil)^{3/2}$ matchings. 
We use these to produce a total of $2(2\lceil\beta\log n\rceil)^{3/2}$ matchings: first pair up the matchings 
from $B_1\cup B_2, B_3\cup B_4, \dots$, and then pair up the matchings from $B_2\cup B_3, B_4\cup B_5, \dots$.
\end{itemize}

For each choice of $\mathbf{v}$, the first bullet produces a set of at most $(\beta^2/2)\log^2n$ paths; 
and the second bullet produces a set of at most $2(2\lceil\beta\log n\rceil)^{3/2}$ matchings.  
Since there are $N=n\pi/\alpha\log n$ choices of $\mathbf{v}$, we use a total of at most $O(n\log n)$ paths 
and at most $O(n\sqrt{\log n})$ matchings.

All that remains is to check that the paths and matchings we have produced do in fact cover all pairs of points from $A$.  
Note that, for each of the orderings we define, we have covered all pairs that are at most $\beta\log n$ apart in the ordering.
Thus it is enough to show that, for every pair of points $a,b\in A$, there is some $\mathbf{v}$ such that $a$ and $b$ 
are at most $\beta\log n$ apart in the corresponding ordering of $A$.

We first note a geometric fact.
Let $a,b \in A$ be any two (distinct) points in $U=[0,1]^2$.
We rotate the picture, and assume for convenience that $ab$
is a vertical segment, and that the unit square has been rotated by some angle. 
Let $U_{\gamma}$ denote the square rotated by $\gamma$; we have $\area(U_{\gamma})=1$ for every $\gamma$. 
Consider the geometric locus $R(a,b)$ of the points $p \in U_{\gamma}$ on lines making
an angle of at most $\theta$ with $ab$ and which intersect $ab$, see Fig.~\ref{fig:locus}.

\begin{figure}[htbp]
\centering
\includegraphics[scale=0.43]{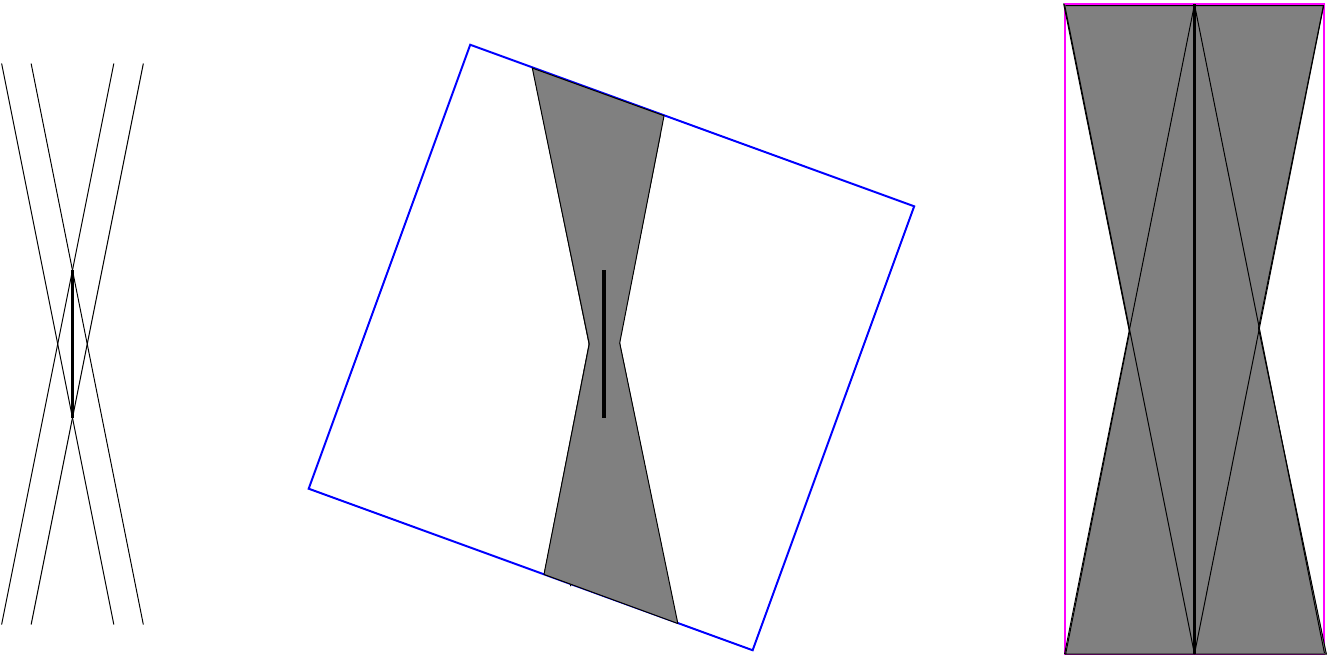}
\caption {Left: four lines making an angle of $\theta$ with $ab$ through $a$ and $b$.
  Center: the geometric locus (shaded) associated with a vertical segment.
  Right: a bounding rectangle for the locus associated with a long vertical segment.}
\label{fig:locus}
\end{figure}

\begin{lemma}  \label{lem:locus}
The area of $R(a,b)$ is at most $4 \theta$. 
\end{lemma}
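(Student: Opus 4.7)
The plan is to work in the rotated coordinates already set up in the paragraph preceding the lemma, where $ab$ is vertical of length $d$, with $a$ placed at the origin and $b=(0,d)$, and $U_\alpha$ is the rotated unit square. Since $U_\alpha$ is convex and contains both $a$ and $b$, the entire segment $ab$ lies in $U_\alpha$; note also that $\diam(U_\alpha)=\sqrt{2}$, independent of $\alpha$.

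The key step is a single geometric observation. Every $p\in R(a,b)$ lies on some line meeting $ab$ and making angle $|\phi|\le\theta$ with the vertical direction, so
\[
p \;=\; q + t\,(\sin\phi,\cos\phi)
\]
for some $q\in ab$ and some $t\in\mathbb{R}$. Because both $p$ and $q$ belong to the convex set $U_\alpha$, we have $|t|=\|p-q\|\le \diam(U_\alpha)=\sqrt{2}$. Since $q$ lies on the $y$-axis, $q_x=0$, and therefore
\[
|p_x| \;=\; |t|\,|\sin\phi| \;\le\; \sqrt{2}\,\sin\theta.
\]
Thus $R(a,b)$ is contained in the vertical strip $\{|x|\le\sqrt{2}\sin\theta\}$. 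Intersecting this strip with $U_\alpha$, whose vertical extent is also at most $\sqrt{2}$, yields a region of area at most $2\sqrt{2}\sin\theta\cdot\sqrt{2}=4\sin\theta\le 4\theta$, as claimed.

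The main obstacle is resisting a naive slicing computation. Bounding the horizontal width of $R(a,b)$ at each height $y$ gives roughly $2\max(|y|,|y-d|)\tan\theta$, and the loose bounds $|y|,|y-d|\le\sqrt{2}$ combined with a $y$-range of $\sqrt{2}$ only produce $4\tan\theta$, a constant larger than the claimed $4\theta$ because $\tan\theta>\theta$. The cleaner bound comes from working directly with the Euclidean distance $\|p-q\|$: since the line from $q$ to $p$ of length at most $\sqrt{2}$ has perpendicular component $\|p-q\|\sin\phi$, the perpendicular displacement is forced to scale with $\sin\theta$ rather than $\tan\theta$, which is what makes the constant $4$ go through.
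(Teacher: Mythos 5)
Your proof is correct, and it takes a slightly different (and in fact cleaner) route than the paper. Both arguments enclose $R(a,b)$ in a thin vertical strip intersected with $U_\alpha$, but the paper bounds the horizontal half-width of the locus by $\sqrt{2}\tan\theta$ (vertical extent times $\tan\theta$), which only gives $4\tan\theta$; to get under $4\theta$ it then has to shave off two isosceles triangles to reach $3\tan\theta$ and invoke the assumption that $n$ is large (so that $3\tan\theta\le 4\theta$). You instead parametrize a point of the locus as $p=q+t(\sin\phi,\cos\phi)$ with $q\in ab$ and bound the Euclidean distance $|t|=\|p-q\|$ by $\diam(U_\alpha)=\sqrt{2}$, so the horizontal displacement is $|t||\sin\phi|\le\sqrt{2}\sin\theta$ rather than $\sqrt{2}\tan\theta$. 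This yields $4\sin\theta\le 4\theta$ unconditionally, with no triangle-trimming and no ``$n$ sufficiently large'' caveat. Your closing remark correctly identifies why the naive slicing computation stalls at $4\tan\theta$; the one small thing worth stating explicitly is that $q\in U_\alpha$ because $U_\alpha$ is convex and contains $a$ and $b$, which you do note. No gaps.
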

\begin{proof}
  Consider four lines incident to $a$ and $b$ respectively, and making 
(clockwise or counterclockwise) angles of $\theta$ with $ab$.
  The boundary of the locus is made by these four lines and two polygonal arcs
  on the boundary of the square. The locus is easily seen to be contained in an axis-parallel rectangle
  of width $2 \sqrt{2} \tan \theta$ and height $\sqrt2$.
  As such, its area is bounded from above by $ 4 \tan \theta$.
  Excluding the two isosceles triangles based on the left and right vertical sides yields an improved
  area bound of $\frac34 \cdot 4 \tan \theta = 3 \tan \theta \leq 4 \theta$
  (here we assume that $n$ is sufficiently large). 
\end{proof}

We are now ready to complete the proof.  We reveal two points $a,b\in A$ (and note that $a\ne b$ with probability 1).
Choose a vector $\mathbf{v} \in \V$ that makes an angle at most $\theta$ with the direction orthogonal to $ab$.
Then any point of $A$ that comes between $a$ and $b$ in the ordering generated by $\mathbf{v}$ must 
lie in the region $R(a,b)$, and the lemma shows that, for any given point, the probability that the point lies in this region is at most $4\theta$.  The probability that at least $\beta\log n$ points of $A$ lie in $R(a,b)$ is therefore at most
\[
\binom{n}{\lceil \beta\log n\rceil}(4\theta)^{\lceil \beta\log n\rceil}
\le \left(\frac{en}{\lceil \beta\log n\rceil}\right)^{\lceil \beta\log n\rceil}(4\theta)^{\lceil \beta\log n\rceil}
\le \left(\frac{4\alpha e}{\beta}\right)^{\beta\log n},
\]
where we used the estimate $\binom n k\le(en/k)^k$.
This probability is $o(1/n^2)$, provided we have chosen $\alpha$ and $\beta$ sensibly.  There are $O(n^2)$ pairs: 
so, by the union bound, with high probability,  for every pair $a,b\in A$ there is a direction in which $a$ and $b$ 
are separated by at most $\beta\log n$ other points.  This completes the proof.
\qed

\section{Proof of Theorem~\ref{thm:dense}}

Consider an $\alpha$-dense point set $A$.
By scaling, we can assume that $A$ is contained in the unit square $U=[0,1]^2$,
and the distance between any two points of $A$ is at least $\sqrt{2} \cdot \alpha^{-1} n^{-1/2}$.

The covering procedure is similar to Phase I in the proof of Theorem~\ref{thm:random}, but with a twist.
Let $\varphi=c n^{-1/2}$ be a small angle, for a suitable small constant $c>0$, so that $N=2\pi/\varphi$ 
is an integer. Let $\V$ be a set of $N$ unit vectors evenly spaced around the origin so that 
$(1,0) \in \V$. For each vector $\mathbf{v} \in \V$, compute the $\mathbf{v}$-monotone spanning path
$\xi_v$ induced by $A$.
Let the path be $a_1a_2\ldots a_n$ in the order of monotonicity. Set 
\[ \beta=\frac{8 \alpha^2 c +4\sqrt{2} \cdot \alpha}{\pi}, \]
and for each $1\leq j \leq i \leq \beta\sqrt{n}$, 
add the path $a_ja_{i+j}a_{2i+j}\ldots$ to the set $\mathcal{P}$ of covering paths.
Note that for each vector $\mathbf{v} \in \V$, there are at most $1+2+\cdots+\beta\sqrt{n} < \beta^2n$
paths to be added to $\mathcal{P}$, all of which are monotone and, therefore, $\mathcal{P}$ contains 
at most $\frac{2\pi}{c} \sqrt{n} \cdot \beta^2n = \Theta(n^{3/2})$ monotone paths.

We claim that the paths in $\mathcal{P}$ cover the complete geometric graph on $A$. That is, 
for any pair of points $a,b \in A$, we show that the segment $ab$ appears in at least one path in $\P$. 
We construct the region $R(a,b)$ analogous to the one in the proof of Theorem~\ref{thm:random}
but with respect to $\varphi$ instead. By Lemma~\ref{lem:locus}, its area is at most $4\varphi$.
Assume that $R(a,b)$ contains $k$ points of $A$. The disks of radius 
$r=\frac{\sqrt{2}}{2} \cdot \alpha^{-1} n^{-1/2}$
centered at these $k$ points are disjoint and they cannot exceed the region $R(a,b)$ extended by a margin of $r$, whose area is then at most $4(\varphi+r)$. The total area of the $k$ disks is at most $4(\varphi+r)$,
and therefore
\begin{align*}
 k\pi r^2 &\leq 4(\varphi + r), \text{ thus }\\
k\pi &\leq 8 \alpha^2 n \left(\frac{c}{\sqrt{n}} + \frac{\sqrt{2}}{2 \alpha \sqrt{n}} \right)
\text{ or } \\
k &\leq \frac{8 \alpha^2 c +4\sqrt{2} \cdot \alpha}{\pi} \cdot \sqrt{n} = \beta\sqrt{n}.
\end{align*}

This means that in any $\mathbf{v}$-monotone spanning path induced by $A$,
where $\mathbf{v} \in \V$ makes an angle of at most $\varphi$ with $ab$, 
there are at most $\beta\sqrt{n}$ points between $a,b$ and thus, 
the segment $ab$ appears in one of the monotone paths constructed from $\xi_v$.

Since every monotone path can be decomposed into two monotone matchings 
(the odd- and even-numbered edges in the path),
the edge set of  $K_n[A]$ can be covered by $\Theta(n^{3/2})$ monotone matchings.

For the analysis of the algorithm, we have $\Theta(\sqrt{n})$ vectors and  for
each vector $v$ we first sort the points according to their $v$ component in
$O(n\log n)$ time. Finally, by tracing the points in  the sorted array,
each point may update at most $O(\sqrt{n})$ monotone paths corresponding to~$v$.
Overall, the running time of the algorithm is 
\[ \Theta(\sqrt{n}) \, \left[O(n \log n) + n \cdot O(\sqrt{n}) \right ] = O(n^2). \]
\qed

\section{Proof of Theorem~\ref{thm:arbitrary}}

\emph{Lower bound.}
Assume that $n$ is divisible by $3$, \ie, $n=3k$. 
Partition the $n$ points in our set $P$ into three groups $A,B,C$, of the same size, 
close to the vertices of a unit equilateral triangle, 
where each group consists of roughly equidistant points, but the distances are different;
see Fig.~\ref{fig:tripartite}. Let the diameters of the 3 groups of $k$ vertices be 
\[ \diam(C) \ll \diam(B) \ll \diam(A) \ll 1. \]

The number of (undirected) inter-group edges, namely edges in 
\[ E_0 := E(A\times B) \cup E(A\times C) \cup E(B\times C), \]
is $3k^2=n^2/3$.
Let $\P$ be a covering of $K_n[P]$ by \emph{monotone} paths. 
We claim that every path $\xi \in \P$ contains at most five edges in $E_0$. 
Consequently, covering all the edges of the tripartite graph on 
$P = A \cup B \cup C$  requires at least $3k^2/5 = n^2/15$ monotone paths. 

\begin{figure}[htbp]
\centering
\includegraphics[scale=0.55]{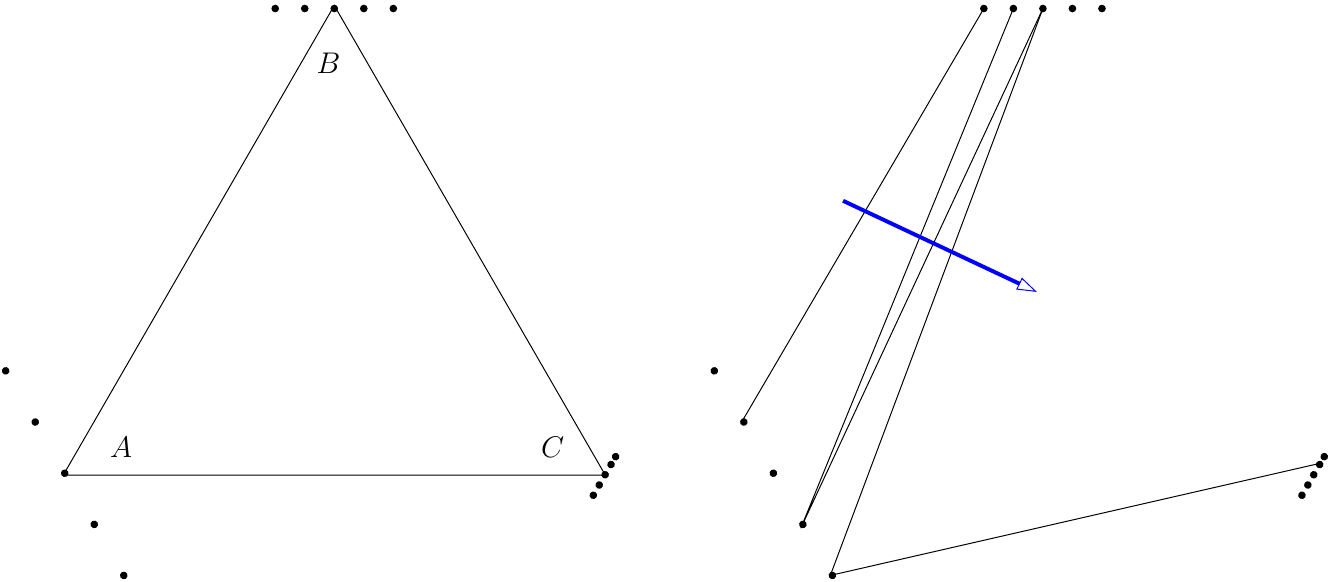}
\caption {Covering the edge set of a tripartite graph by monotone paths.
  The figure shows the edges in $E_0$ corresponding to a monotone path $\xi$;
  note that this edge set may be disconnected.
  A~direction of monotonicity is drawn in blue color.}
\label{fig:tripartite}
\end{figure}

To verify the claim, observe that the directed edges in $\xi$ can be of six types:

\begin{enumerate} \itemsep 2pt
\item $A \to B$, and the corresponding opposite orientation $B \to A$
\item $A \to C$, and the corresponding opposite orientation $C \to A$
\item $B \to C$, and the corresponding opposite orientation $C \to B$
\end{enumerate}

Let us trace the edges of $\xi$ from one end of the path to the other,
and record the sequence of groups in the order they are visited.
(When two consecutive vertices belong to the same group, we do not repeat the corresponding symbol in this sequence.) 

Due to the monotonicity of $\xi$, the resulting sequence satisfies the following conditions:

\begin{enumerate} [(i)]  \itemsep 2pt
\item There is no cycle of the form $A \to B \to C \to A$.
\item For any directed edge type $U$, let $U^r$ denote the type of an edge with opposite orientation.
  There are no two types, $U$ and $V$, such that
  $\xi$ has edges belonging to all four types, $U$, $U^r$, $V$, and $V^r$.
  This follows from the fact that there is no straight-line on which the orthogonal projection
  of four such edges would be disjoint. 
  \item For any two groups, $X,Y \in \{A,B,C\}$, there are at most \emph{four} edges 
    between $X$ and $Y$ in $\xi$. Moreover, if $\diam(X) \gg \diam(Y)$
    and there are four edges between $X$ and $Y$ in $\xi$, then $\xi$ starts or ends at
    a point in $X$.  
\end{enumerate}

This implies that there are no monotone paths consisting of more than \emph{five} edges in $E_0$,
and the only monotone paths covering five edges are of the following form:

\begin{enumerate} \itemsep 2pt
\item $A \to B \to A \to B \to A \to C$ (shown in Fig.~\ref{fig:tripartite}). 
\item $A \to C \to A \to C \to A \to B$
\item $B \to C \to B \to C \to B \to A$
\end{enumerate}

\medskip
\emph{Upper bound.}
Since every 2-edge path is monotone, one can decompose the edge set of $K_n[A]$ into
$\lceil \frac12 \binom{n}{2} \rceil$ 2-edge paths.
We can improve upon this bound by using $3$-edge paths. Since not every $3$-edge path is monotone,
we need special $3$-edge paths that are monotone.   
A~$3$-edge path is a \emph{zig-zag} path if its two endpoints lie in opposite open halfplanes
determined by the middle edge of the path. It is easy to see that any $3$-edge zig-zag path
is monotone.

The key is a result by Wilson \cite{Wil75} about decomposition of the edge set of complete graphs
into complete subgraphs of the same size. The case we are interested in is as follows.

\begin{lemma}[Wilson 1975]
  Let $n$ be a large enough positive integer. Then the edge set of $K_n$ can be decomposed into subgraphs
  of the form $K_6$, if and only if $n$ satisfies two divisibility conditions,
  namely $5\ |\ (n-1)$ and $\binom{6}{2} \ | \ \binom{n}{2}$.
  
  Moreover, if $n$ does not satisfy the above divisibility conditions, then one can still pack
  $n^2/30 - O(n)$ $K_6$'s into $K_n$, leaving out only linearly many edges. 
\end{lemma}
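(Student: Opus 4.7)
The plan breaks into three pieces: the easy necessity direction, the hard sufficiency direction, and a simple reduction yielding the near-packing statement. For necessity, a double counting suffices: in any $K_6$-decomposition of $K_n$, each vertex $v$ lies in some number $r$ of blocks and each such block uses exactly five edges at $v$, so $n-1 = 5r$, forcing $5 \mid n-1$; summing $\binom{6}{2}=15$ edges over all blocks gives $15 \mid \binom{n}{2}$. Together these conditions are equivalent to $n \equiv 1, 6 \pmod{15}$.

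For sufficiency I would follow Wilson's three-stage framework. First, assemble a library of base $K_6$-decompositions for small admissible $n$ (such as $n = 16, 21, 31, 36$) via difference methods over cyclic groups $\ZZ_n$ or $\ZZ_{n-1}$, where one finds a small family of base blocks whose cyclic translates exhaust the edge set. Second, set up group-divisible designs (GDDs) with block size $6$ and prove Wilson's fundamental construction: a composition operation that, from a GDD on a master point set together with compatible ingredient GDDs indexed by the points, produces a larger GDD. Third, apply the fundamental construction together with ``filling in groups'' by the base designs, and invoke Wilson's closure theorem to show that the admissible set produced this way eventually catches every residue in $\{1,6\} \pmod{15}$ beyond some threshold.

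For the moreover clause, given $n$ that fails the divisibility conditions, choose the largest $n' \le n$ with $n' \equiv 1$ or $6 \pmod{15}$; since consecutive admissible values differ by at most $10$, we have $n - n' \le 10 = O(1)$. Apply the decomposition result to $K_{n'}$ on any fixed $n'$-subset $V' \subset V(K_n)$, producing $\binom{n'}{2}/15 = n^2/30 - O(n)$ edge-disjoint copies of $K_6$. The uncovered edges all touch the $n - n'$ leftover vertices and number at most $(n - n') \cdot n = O(n)$, so only linearly many edges are missed.

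The main obstacle is the sufficiency stage: proving the fundamental construction and its closure requires a detailed multi-layer combinatorial argument, and one must separately verify that the block size $k=6$ admits enough seed designs so that no exceptional residue in $\{1,6\} \pmod{15}$ stalls the recursion. A modern shortcut would be to invoke Keevash's existence theorem for designs (which subsumes Wilson's) via probabilistic absorption; for the weaker near-packing claim alone, a direct Rödl--Frankl semi-random nibble already yields $(1-o(1))\binom{n}{2}/15$ copies of $K_6$ without any number-theoretic side conditions, and would suffice for the downstream $n^2/6 + O(n)$ bound this lemma is invoked to prove.
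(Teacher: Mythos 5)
The paper offers no proof of this lemma---it is quoted from Wilson~\cite{Wil75}, so the only content the authors actually supply is the citation plus the (implicit) reduction to the ``moreover'' clause. Your necessity argument (the local count $n-1=5r$ at a vertex and the global count $15\mid\binom{n}{2}$, equivalent to $n\equiv 1,6\pmod{15}$) and your derivation of the packing statement (pass to the largest admissible $n'\le n$, note $n-n'\le 10$, decompose $K_{n'}$ exactly, and charge the $O(n)$ uncovered edges to the leftover vertices) are both correct and complete, and the latter is exactly the reduction the authors are implicitly relying on. The sufficiency direction is, as you say, the deep part, and your three-stage outline (seed designs, GDDs and the fundamental construction, closure) is the right description of Wilson's method, but it remains a citation-level sketch rather than a proof; that is acceptable here since the paper does no more.

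Two concrete corrections. First, three of your four proposed seed orders do not carry Steiner systems $S(2,6,n)$: Fisher's inequality $b\ge v$ forces $v\ge k^2-k+1=31$ for block size $6$, ruling out $n=16$ and $n=21$, and $n=36$ would be an affine plane of order $6$, which famously does not exist. The smallest genuine example is $n=31$ (the projective plane of order $5$). This is precisely why the lemma carries the ``large enough'' hypothesis and why Wilson's recursion must be seeded with pairwise balanced designs and group-divisible designs rather than with Steiner systems at small admissible orders. Second, your closing remark overreaches: the R\"odl nibble produces only $(1-o(1))\binom{n}{2}/15$ copies of $K_6$, leaving $o(n^2)$ edges uncovered, which yields $n^2/6+o(n^2)$ monotone paths downstream---not the $n^2/6+O(n)$ claimed in Theorem~\ref{thm:arbitrary}. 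To get a linear error term one genuinely needs an exact decomposition at some $n'$ within $O(1)$ of $n$, i.e., Wilson's theorem (or Keevash) in full strength.
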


\begin{figure}[htbp]
\centering
  
\includegraphics[width=0.235\textwidth]{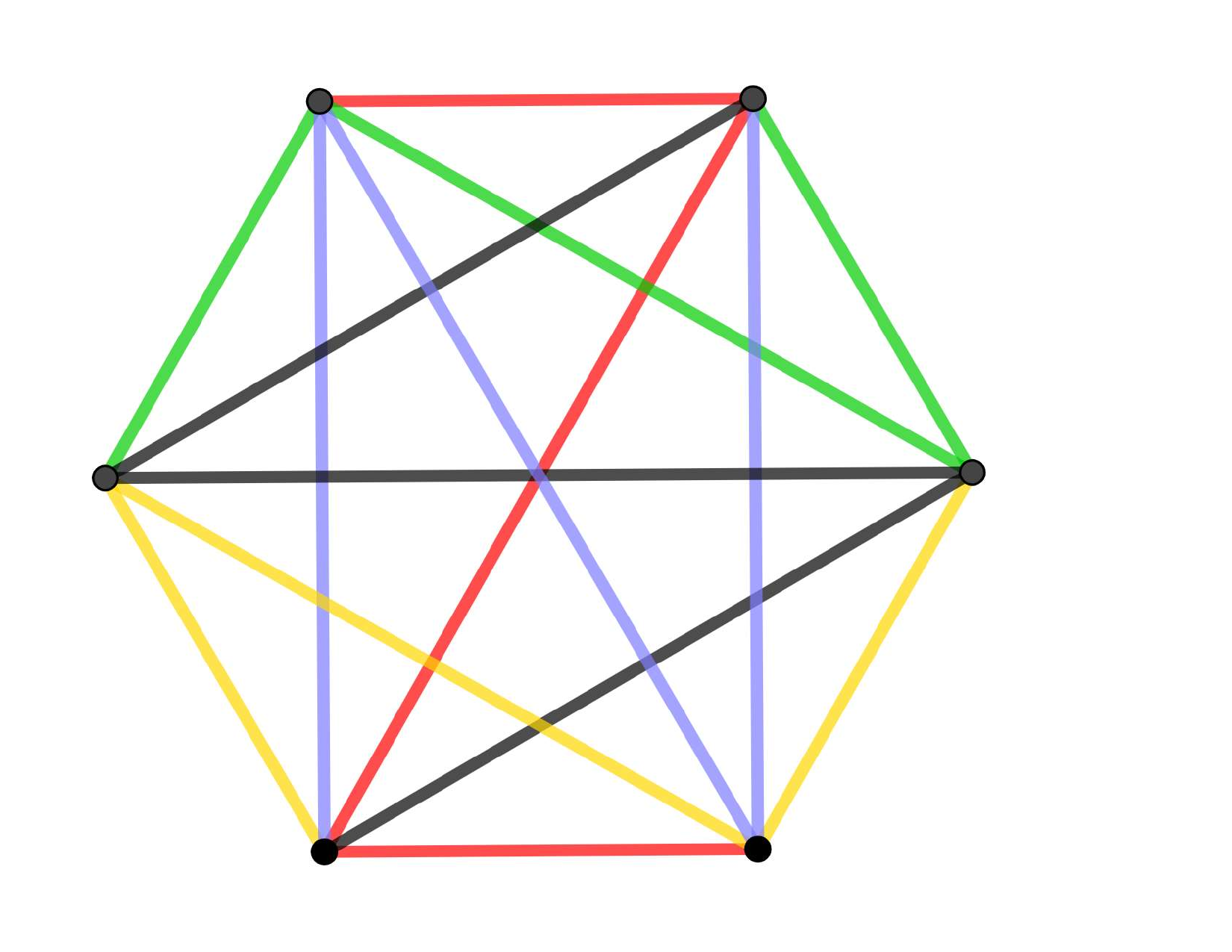}
\includegraphics[width=0.235\textwidth]{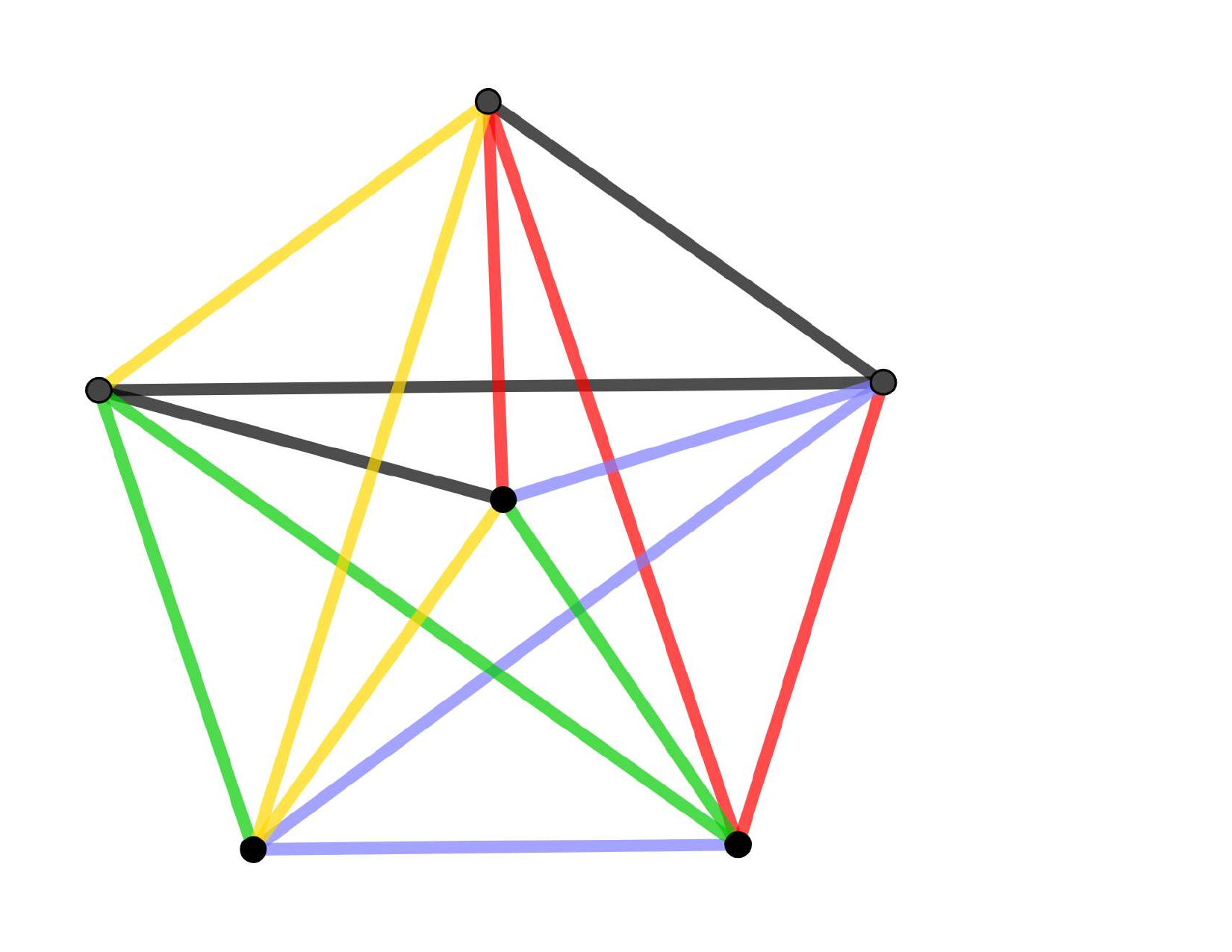}
\includegraphics[width=0.235\textwidth]{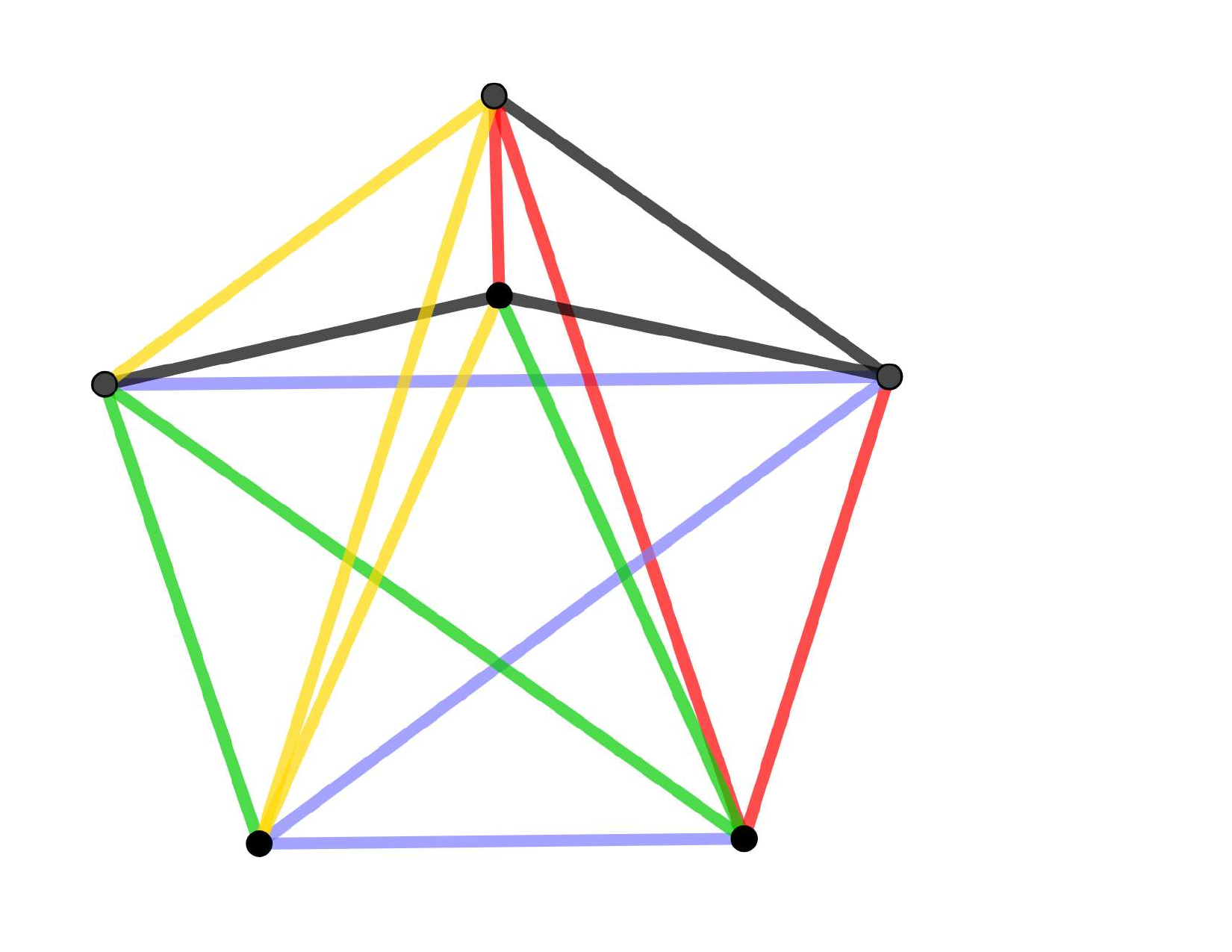}
\includegraphics[width=0.235\textwidth]{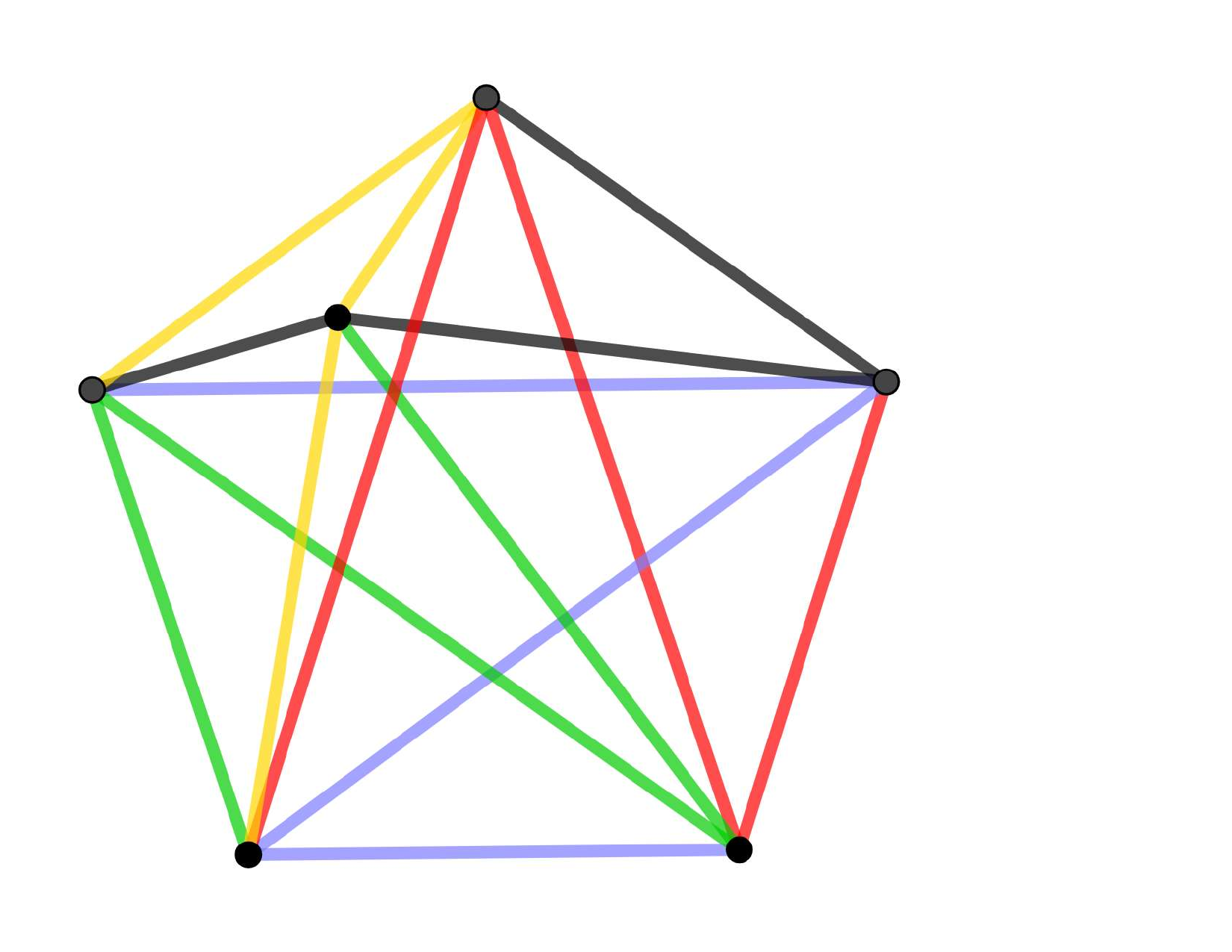}

\includegraphics[width=0.235\textwidth]{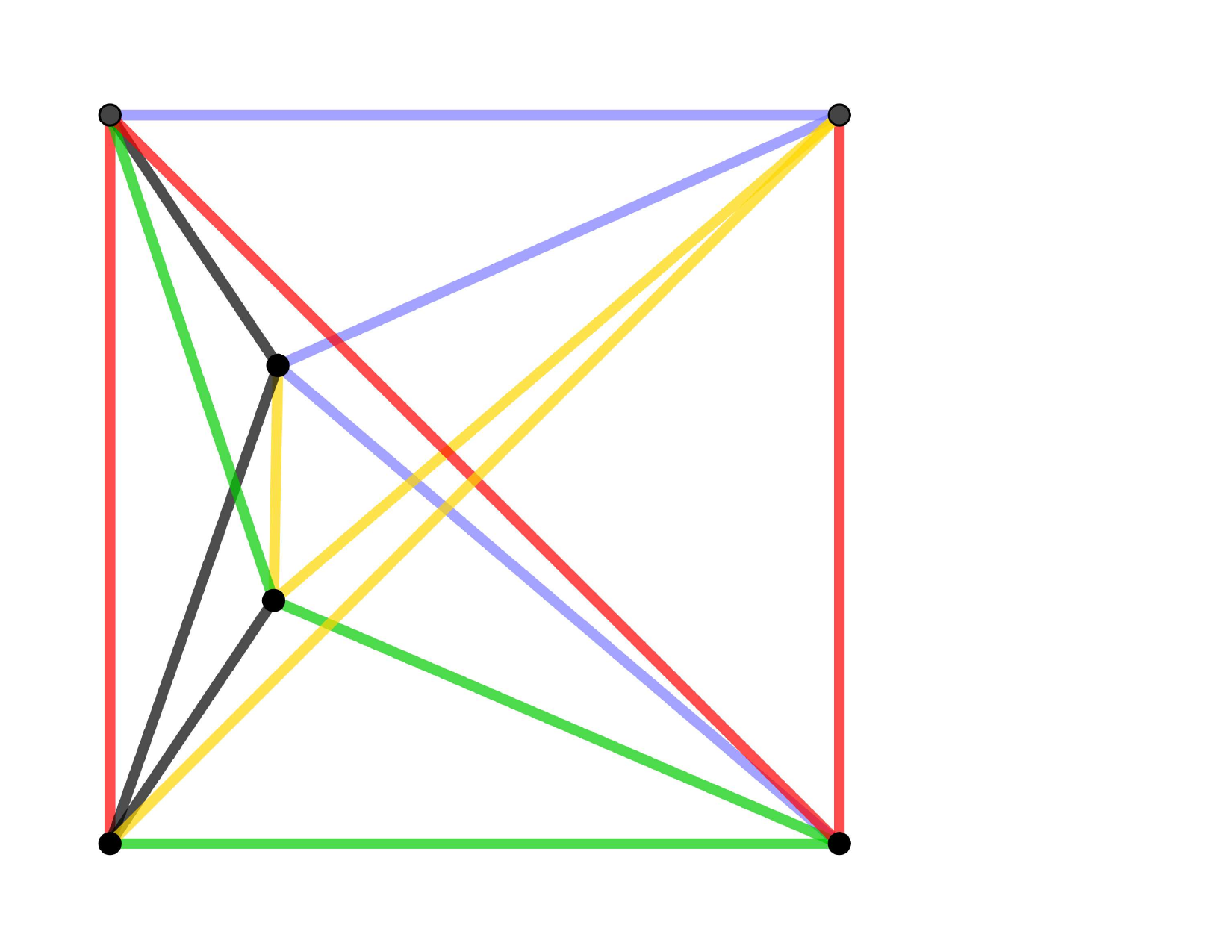}
\includegraphics[width=0.235\textwidth]{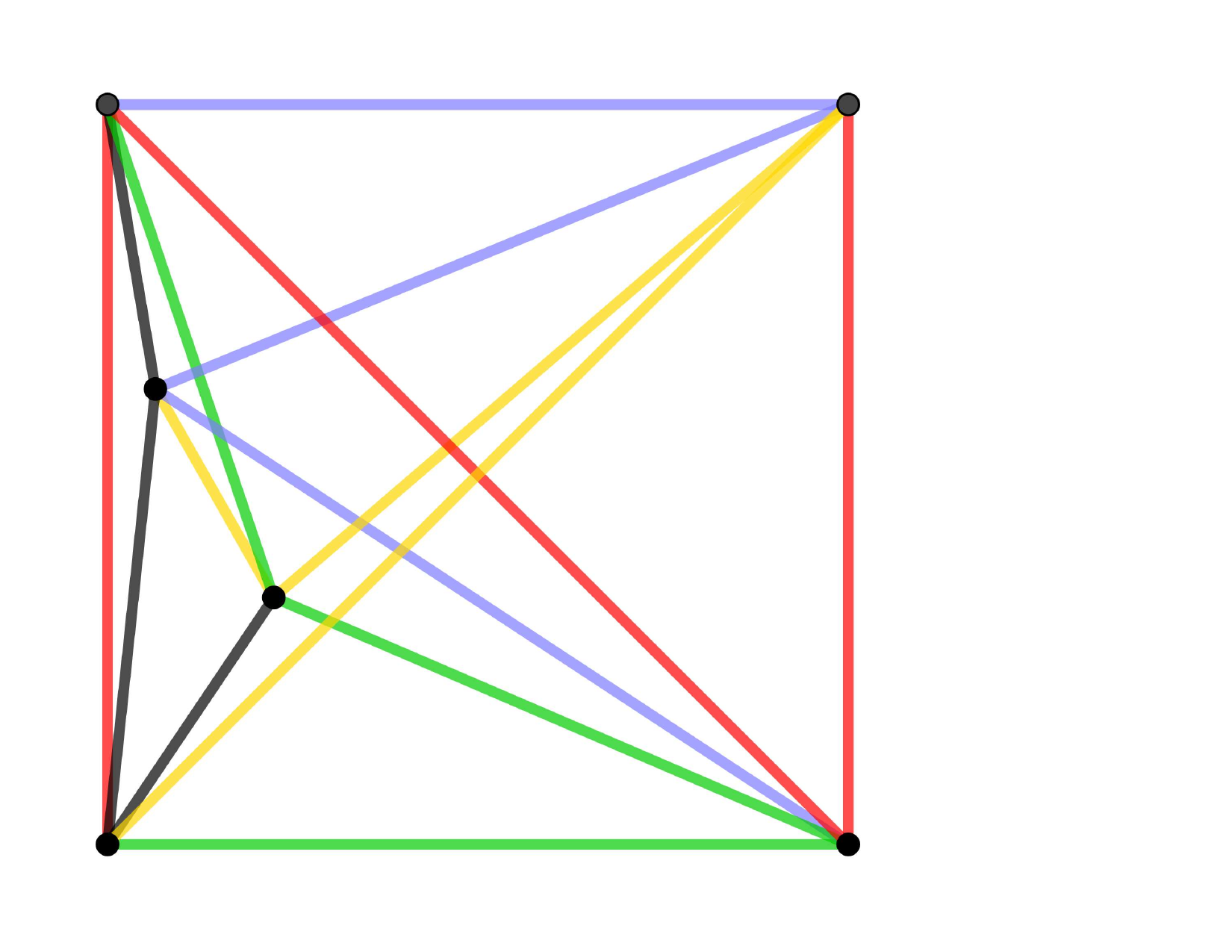}
\includegraphics[width=0.235\textwidth]{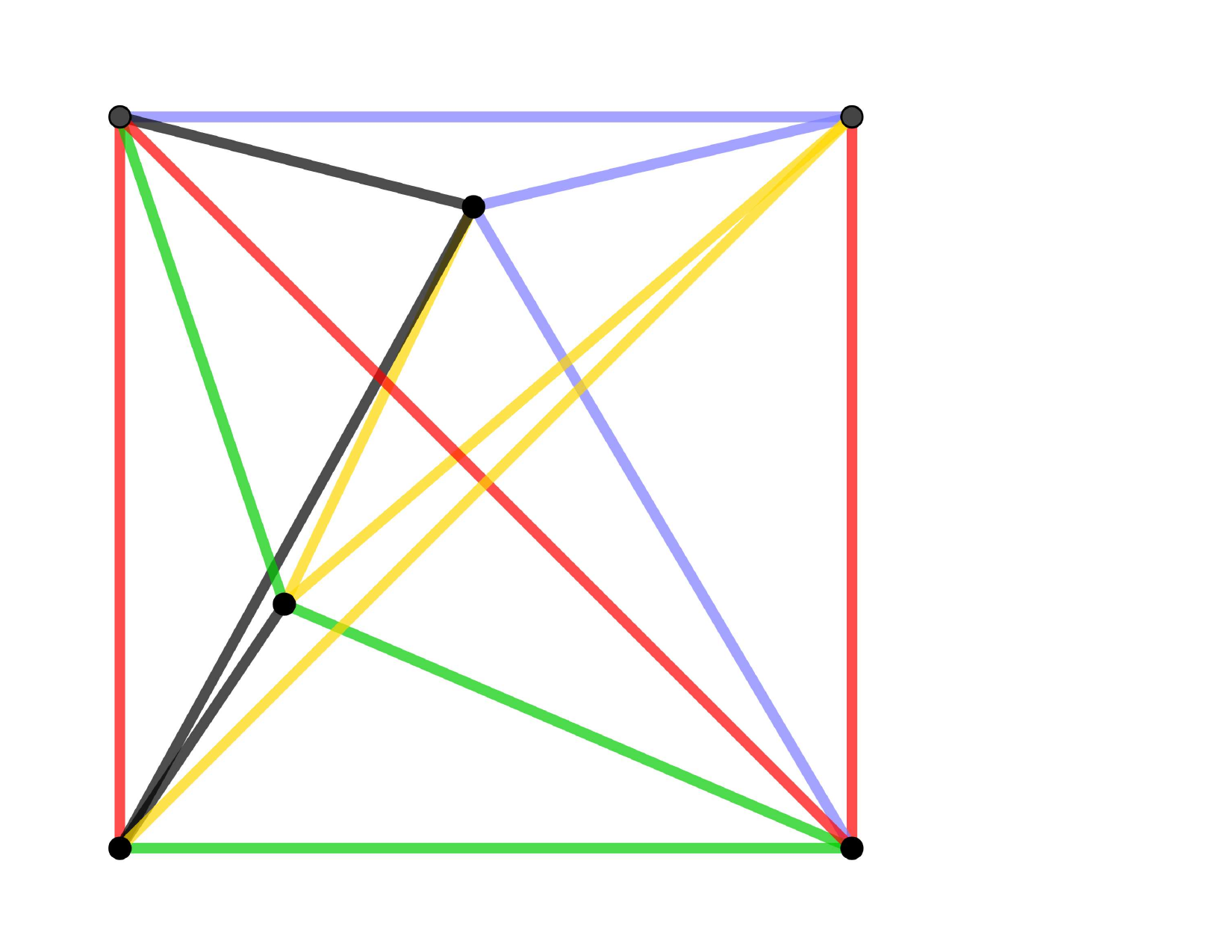}
\includegraphics[width=0.235\textwidth]{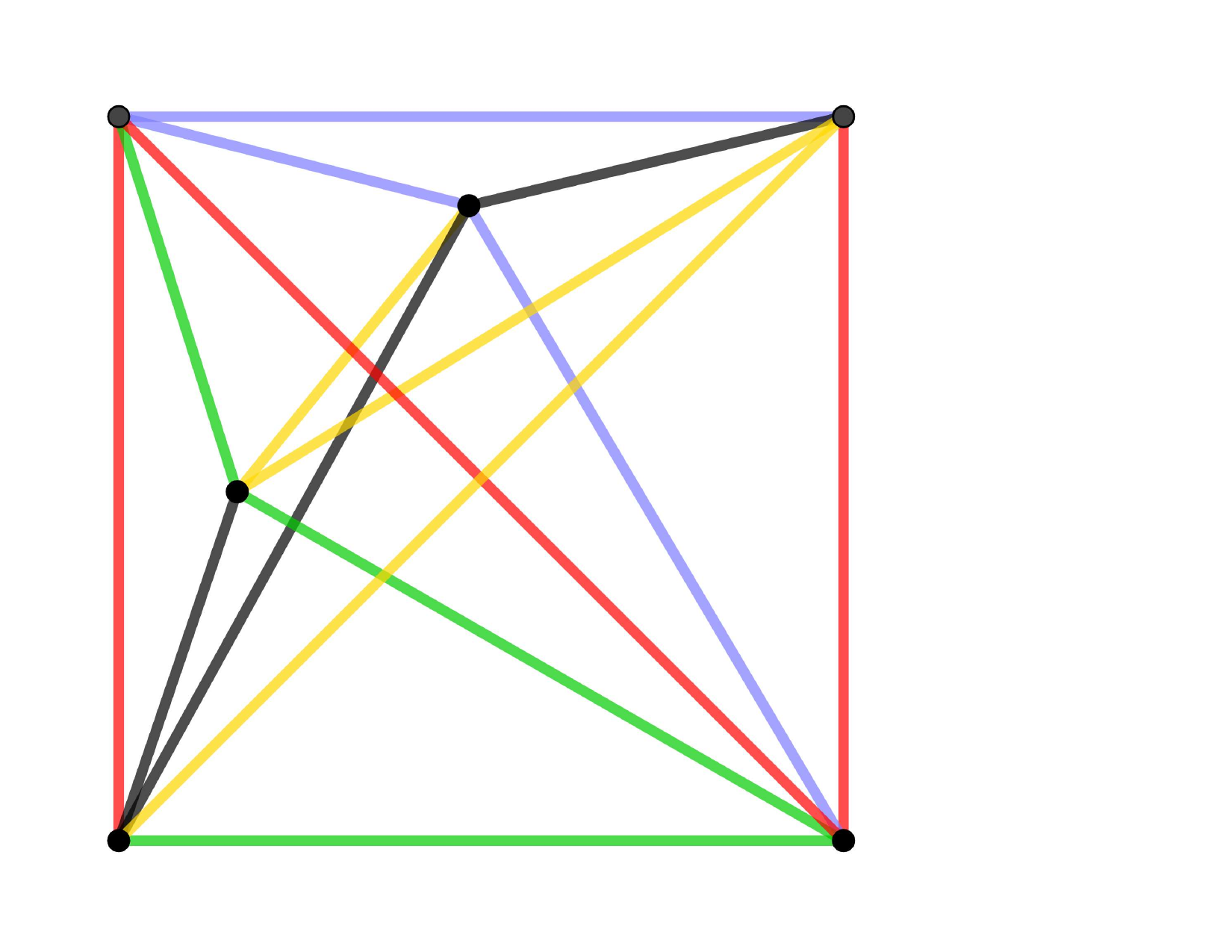}

\includegraphics[width=0.235\textwidth]{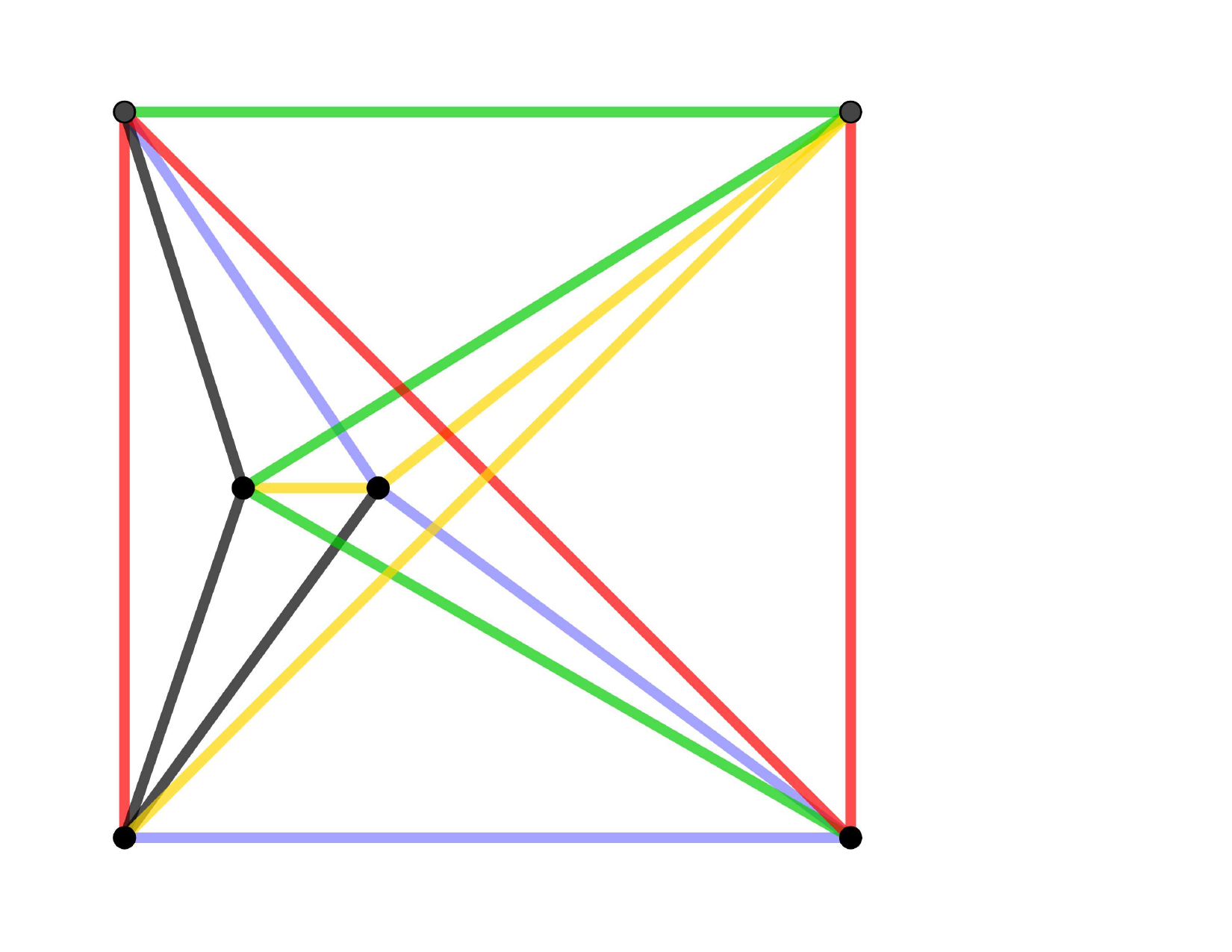}
\includegraphics[width=0.235\textwidth]{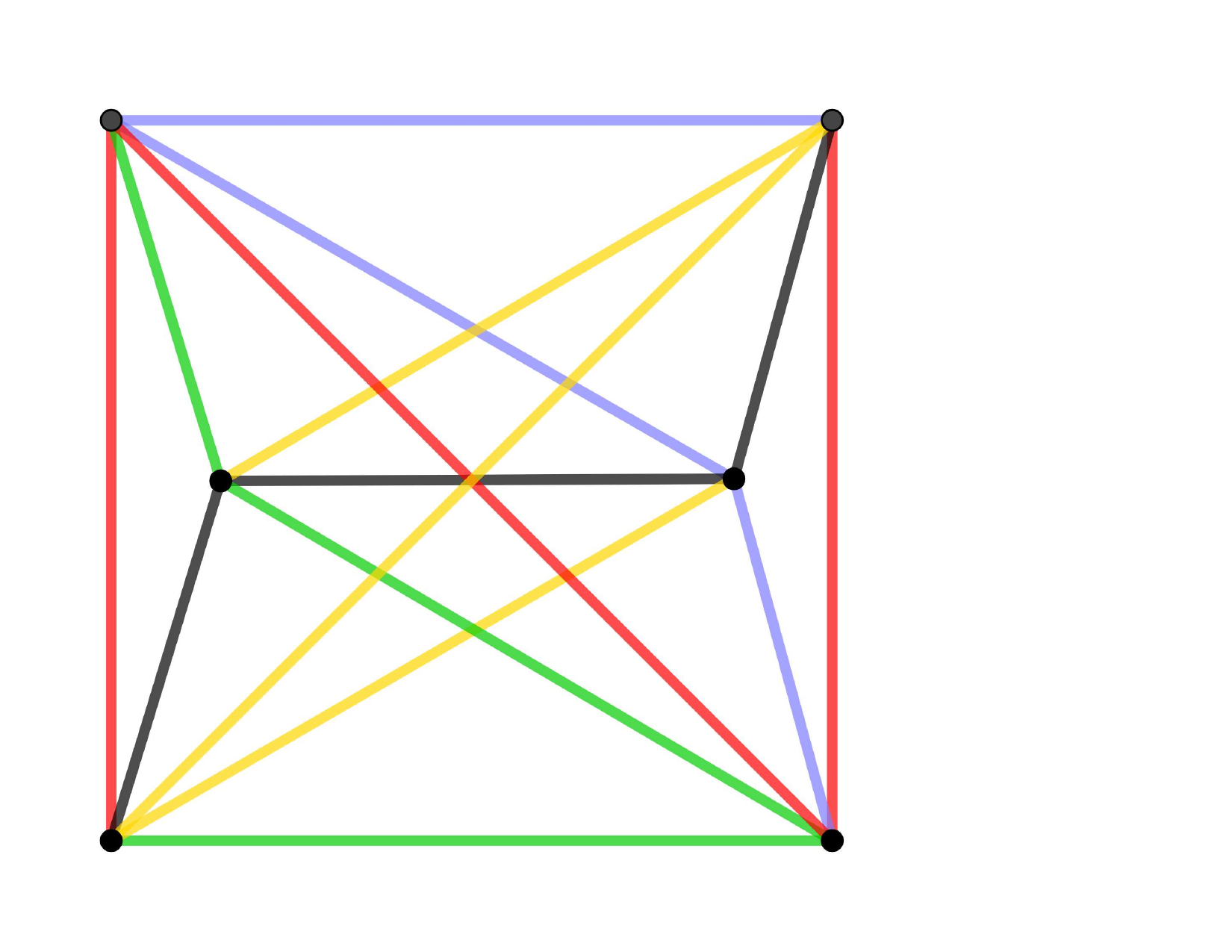}
\includegraphics[width=0.235\textwidth]{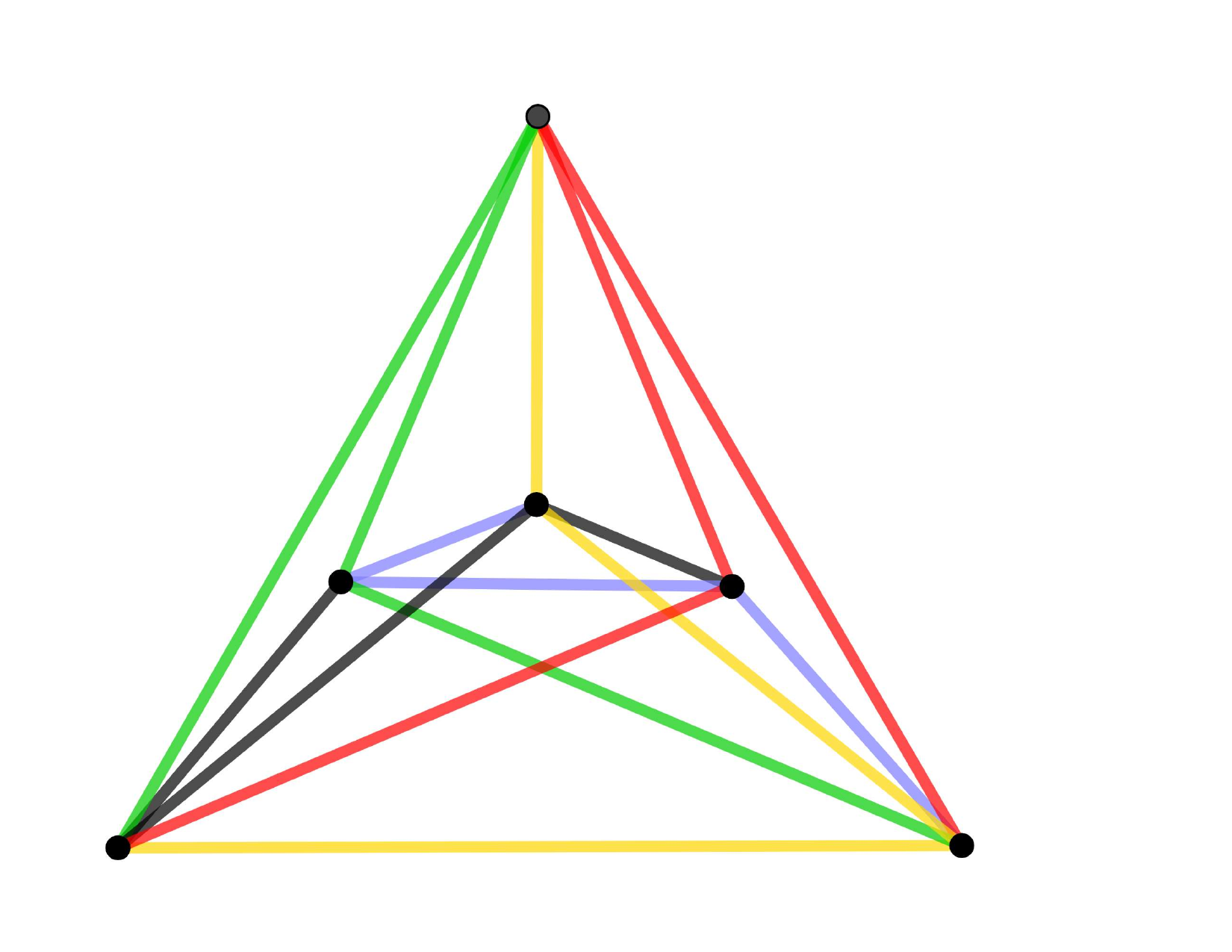}
\includegraphics[width=0.235\textwidth]{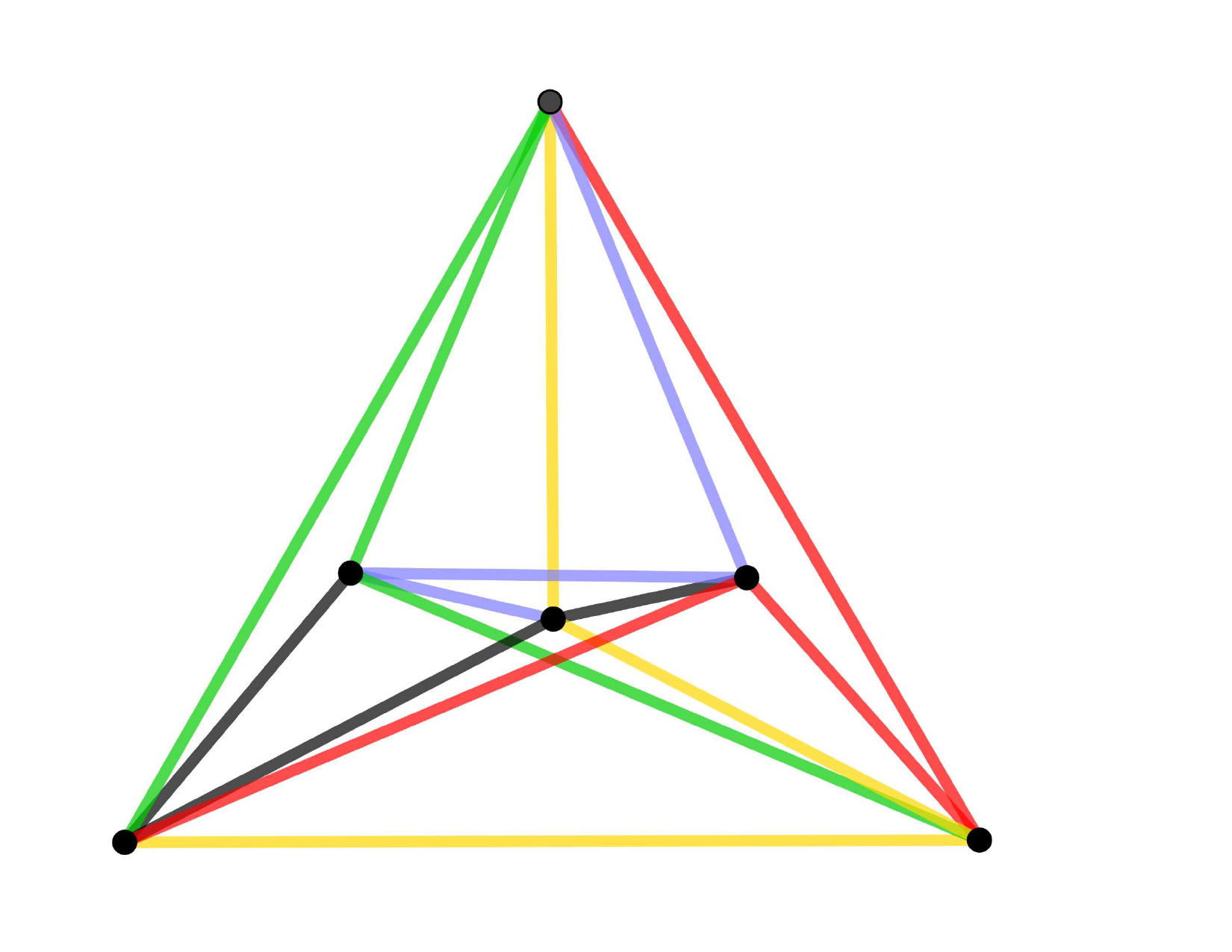}

\includegraphics[width=0.235\textwidth]{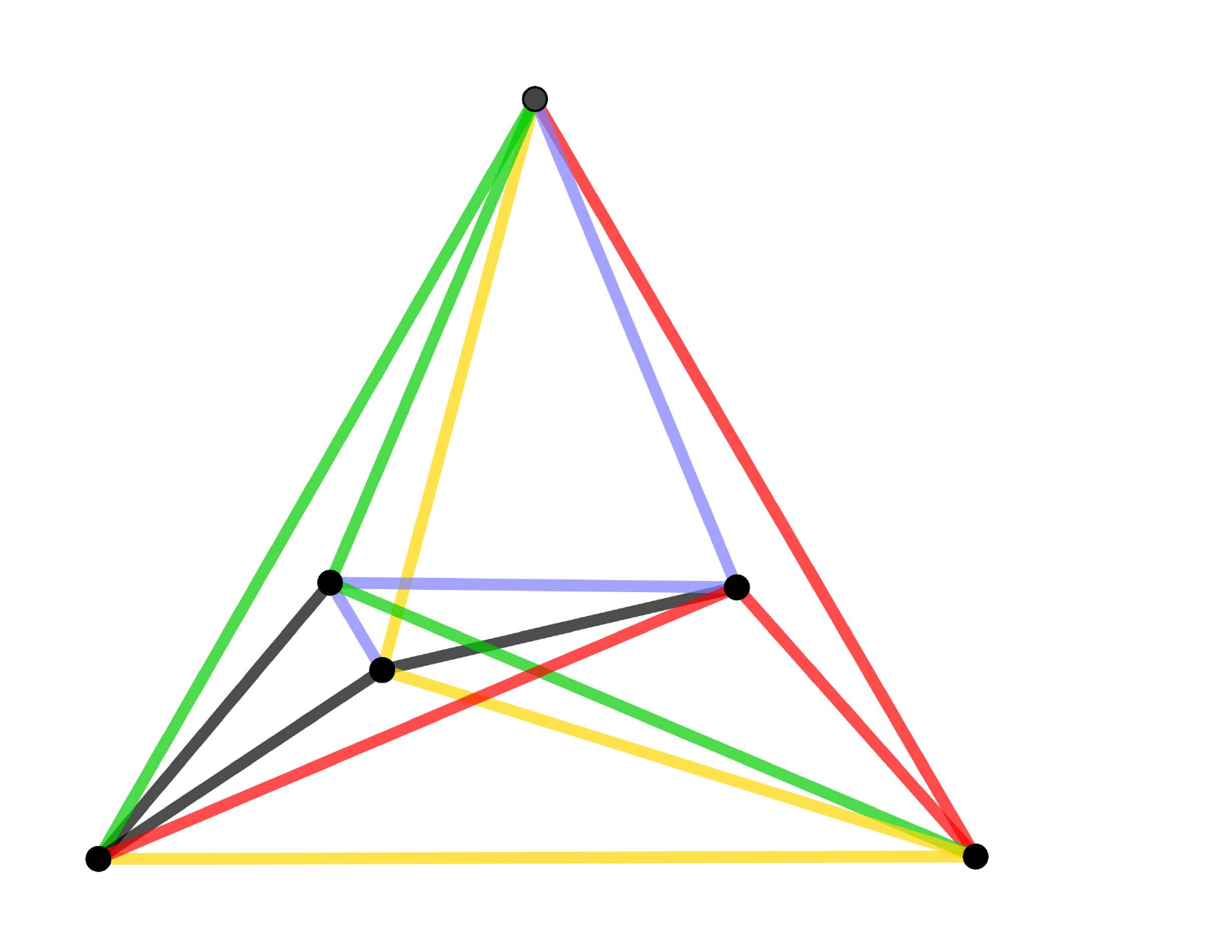}
\includegraphics[width=0.235\textwidth]{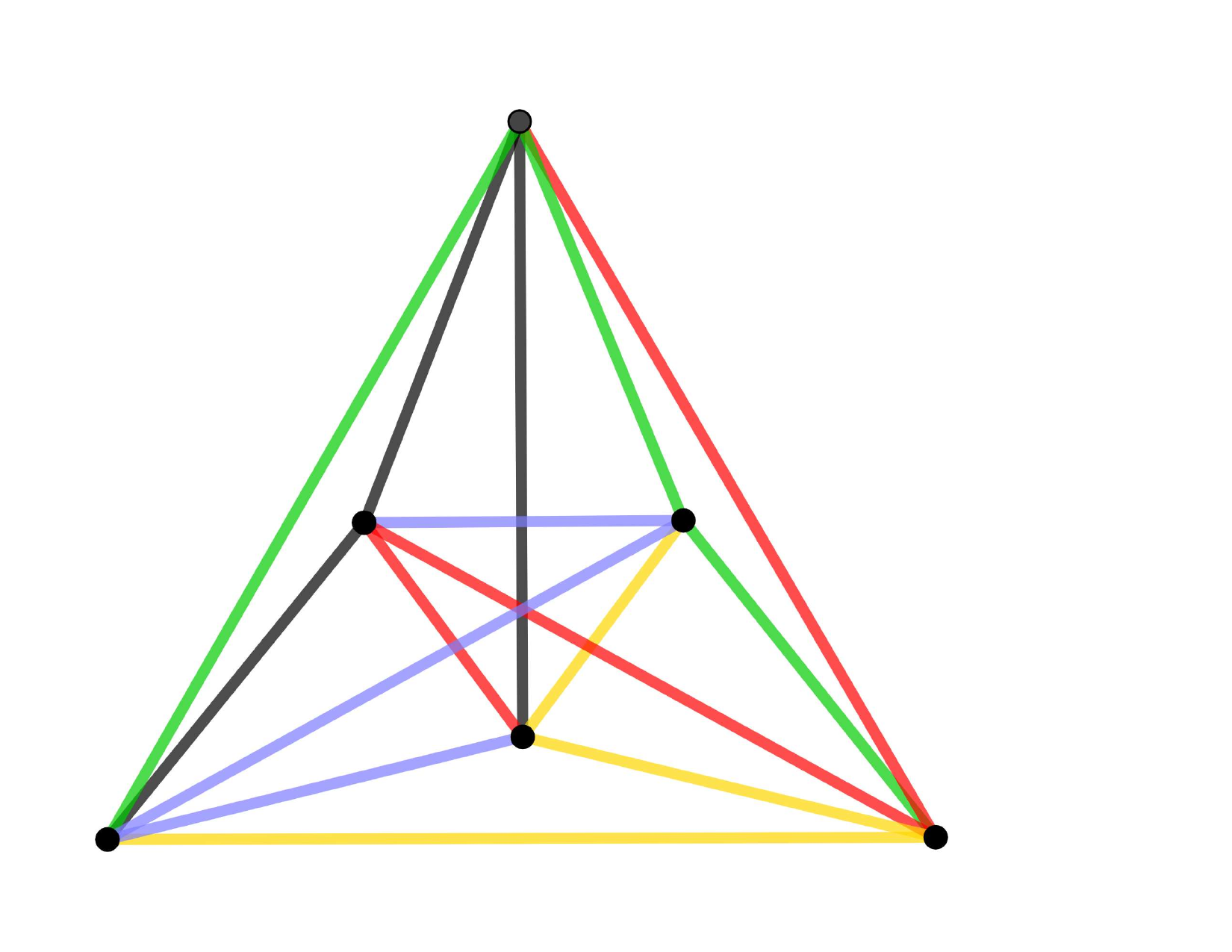}
\includegraphics[width=0.235\textwidth]{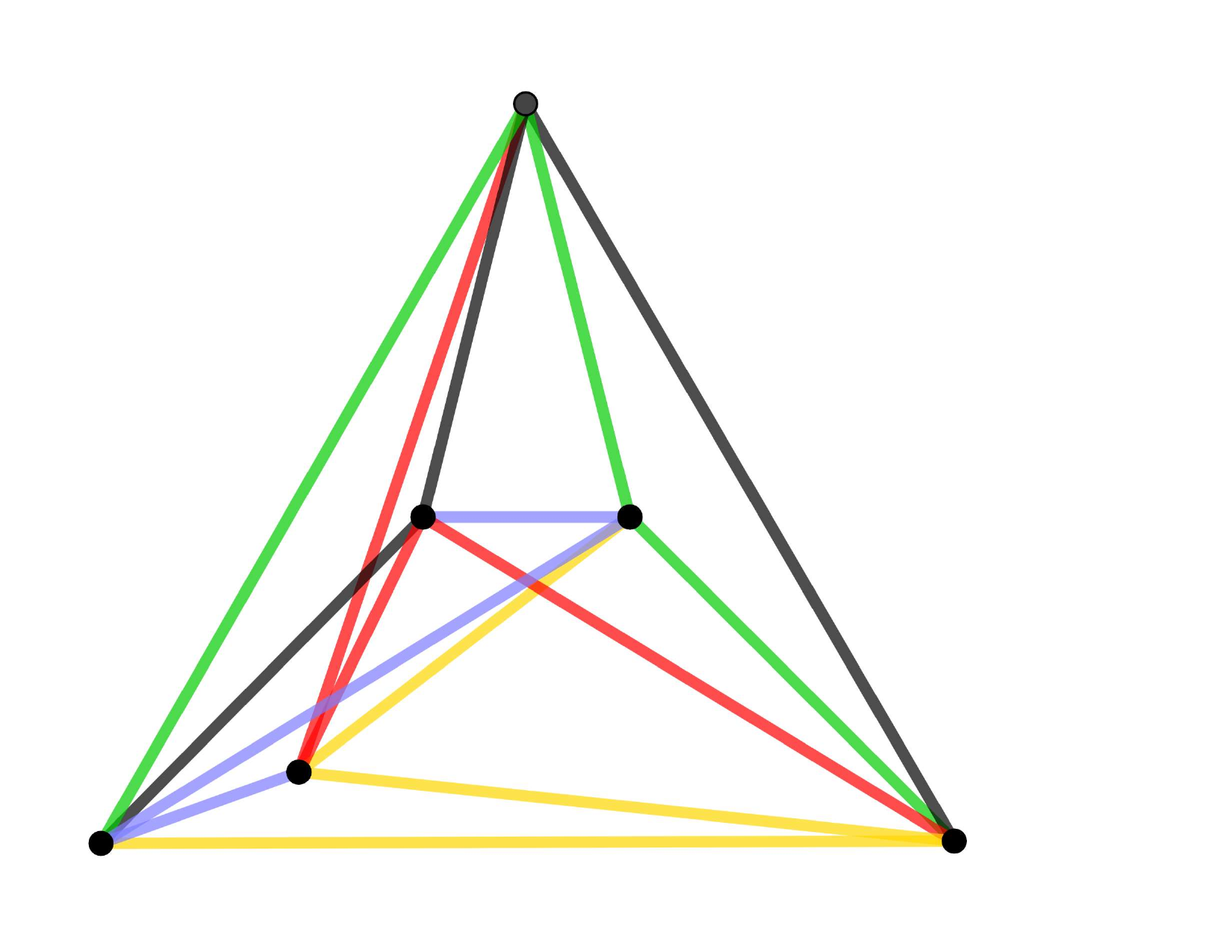}
\includegraphics[width=0.235\textwidth]{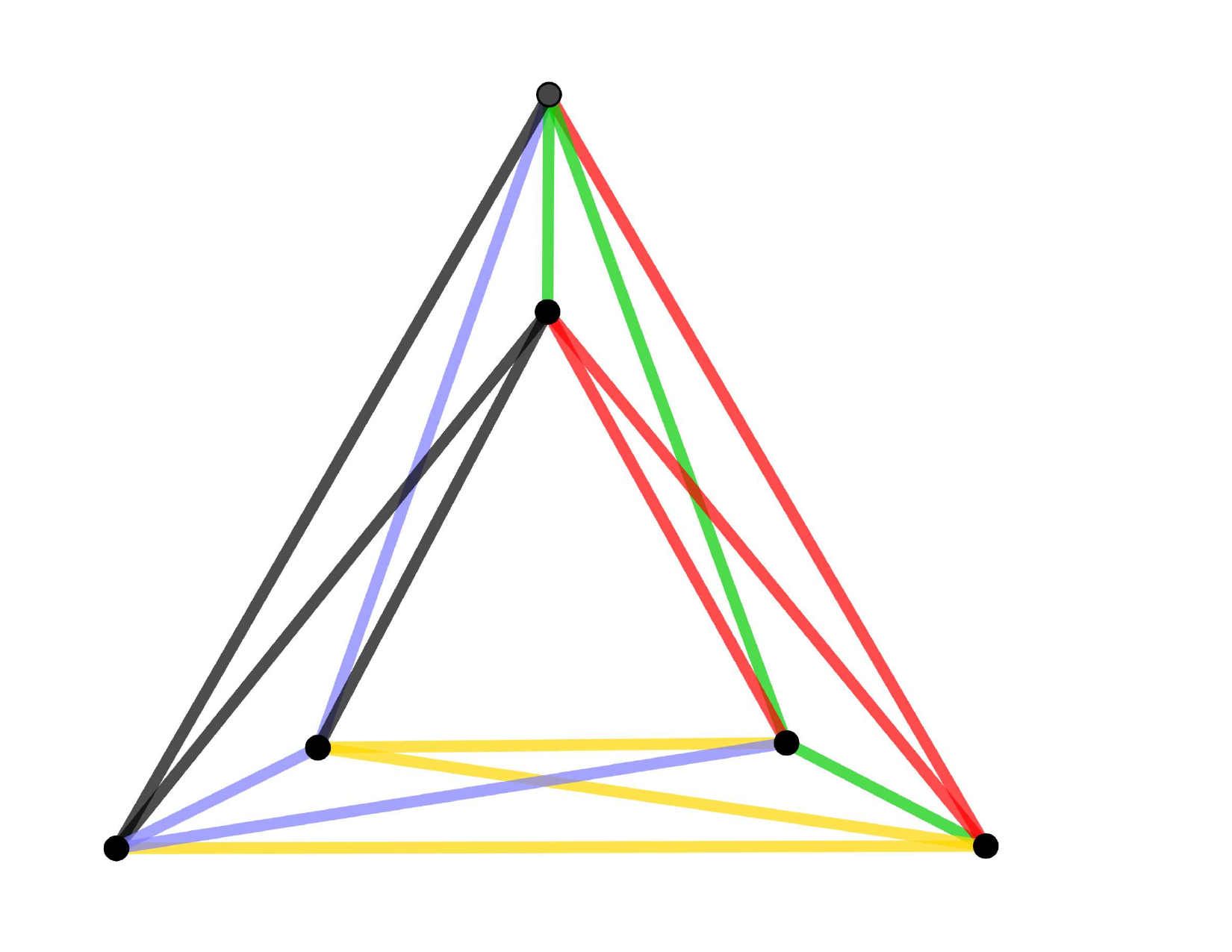}

\caption {Covering the edge set of the complete graph on six points by $3$-edge zig-zag paths.}
\label{fig:16types}
\end{figure}

Our approach is packing as many $K_6$ as possible into $K_n[A]$, then decomposing each $K_6$
into five $3$-edge zig-zag paths, and considering each of the remaining edges as a single path.
This way we can guarantee a decomposition of the edge set of $K_n[A]$ into
$\binom{n}{2}/\binom{6}{2} \times 5 +O(n) = n^2/6 +O(n)$ monotone paths.
It may be worth noting that if $5 \ | \ (n-1)$, the second condition simplifies to $3 \ | \ n(n-1)$.

It suffices to show that for a $6$-point set $A$,
the edge set of $K_6[A]$ can be decomposed into five $3$-edge zig-zag paths.
It is known that there are $16$ order types for such sets, see, \eg, \cite{PW18}.
Since the existence of a zig-zag path decomposition depends only on the order type and not on the specific geometric realization, it suffices to provide zig-zag path decompositions for each type, as we do in Fig.~\ref{fig:16types}.
\qed

\smallskip
A similar bipartite version of the lower bound construction with $n=2k$ points
and $|E_0|=k^2$ (undirected) inter-group edges only yields a lower bound of $n^2/16$: 
If $\P$ is a covering of $K_n[A]$ by monotone paths, it has the property
that every path $\xi \in \P$ contains at most four edges in $E_0$. Consequently,
covering all the edges of the bipartite graph requires at least $k^2/4 = n^2/16$
monotone paths.

\section{Concluding remarks} \label{sec:remarks}

\noindent\textbf{1. } For $k \geq 2$, let $f(k,n)$ denote the maximum number of
edges of a geometric graph on $n$ vertices 
that contains no $k$ pairwise disjoint edges. Early results of
Erd\H{o}s~\cite{Erd46}, Kupitz~\cite{Ku79}, and Alon and Erd\H{o}s~\cite{AE89} 
show that $f(2, n) = n$ and $f(3,n)=O(n)$. 
The first polynomial upper bound for $f(k,n)$, for $k>3$, was established by
Pach and T{\"{o}}r{\H{o}}csik~\cite{PT94}, and was improved by T{\'{o}}th and Valtr~\cite{TV98};
see also~\cite[Chap.~14]{PA95}. 
The current best result, $f(k,n)= O(k^2 n)$ is due to T{\'{o}}th~\cite{To00}. 

It is conjectured that $f(k,n)= O(k n)$~\cite[Chap.~9.5]{BMP05}. If this
conjecture is true, our methods would yield an 
$O(n \log{n})$ (unfortunately, still superlinear) upper bound on the number of
monotone matchings sufficient for covering the edge set of $K_n[A]$,  
analogous to covering this set by crossing-free matchings in~\cite{ADH+05}. 
\smallskip

\noindent\textbf{2. } Obenaus and Orthaber~\cite{OO21} gave a negative answer to
the question of whether every complete 
geometric graph on $n$ vertices ($n$ even) can be partitioned into $n/2$ spanning trees (see~\cite{BHRW06}).
Furthermore, their negative answer extends to the weaker question of whether every complete geometric graph
on $n$ vertices ($n$ even) can be partitioned into $n/2$ plane subgraphs.
See also~\cite{AOO+22}.

On the other hand, it is possible that every complete geometric graph on $n$ vertices 
can be partitioned into $n/2+o(n)$  plane subgraphs. Pach, Saghafian, and Schnider~\cite{PSS23}
proved that a complete convex geometric graph on $n$ vertices cannot be decomposed into fewer than
$n-1$ plane star-forests.

Perhaps the first step towards solving Problem~\ref{problem:1} would be to answer the following question.

\begin{problem} Does there exist a constant $c>1/2$ with the property
  that, for infinitely many values of $n$, there are 
  $n$-element point sets $A$ in the plane such that every covering of the edge set
  of $K_n[A]$ by crossing-free paths requires at least $cn$ paths?  
\end{problem}

\smallskip
\noindent\textbf{3. } Pinchasi and Yerushalmi~\cite{PY24} showed that, given any $n$-element point set $A$
in the plane where $n$ is odd, the edge set of $K_n[A]$ can be partitioned into $(n^2-1)/8$ convex polygons whose vertices belong to $A$. This bound is tight. For the case of even~$n$, $n^2/8 + n/4$
convex polygons suffice and $\lceil n^2/8 \rceil$ are needed. Since every convex polygon can be decomposed
into two monotone paths, we essentially re-obtain the upper bound on monotone paths that comes from covering
by two-edge paths: if $n$ is odd, $(n^2-1)/4$ monotone paths suffice, whereas if $n$ is even,
$n^2/4 + n/2$ monotone paths suffice. These bounds are superseded by our Theorem~\ref{thm:arbitrary}.

\smallskip
\noindent\textbf{4. }
Another interesting class of paths is zig-zag paths. A path is called \emph{zig-zag}
if any three consecutive edges form a $3$-edge zig-zag path, as defined earlier.
Note that this definition does not ensure that the path is monotone or even plane.  

We show that for any finite point set $A$ in general position in the
plane, one can cover the edge set of $K_n[A]$ by plane Hamiltonian
zig-zag paths. Equivalently, for any pair of points $a,b \in A$, the
segment $ab$ appears in at least one such path. To see this, assume
that the line through $a,b$ separates $A\setminus \{a,b\}$ into two
disjoint parts $A_1,A_2$. The edge $ab$ is an edge of the convex hull
of $A_i \cup \{a,b\}$, for $i=1,2$. Starting with $ab$, we construct a
plane zig-zag path that traverses all points in $A_1 \cup \{a,b\}$, as
follows. At the first step, let $b_1 \in A_1$ be the point such that
the angle $\angle abb_1$ is as small as possible. We continue this
process by selecting $b_i$ at the $i$-th step such that the angle
$\angle b_{i-2}b_{i-1}b_i$ is minimized among all remaining points in
$A_1$, where $b_0=b$. It is not hard to verify that once all points
in $A_1$ are included, the resulting path is a plane zig-zag
path. Similarly, starting with $ba$ we can construct a plane zig-zag
path that traverses all points in $A_2 \cup \{a,b\}$. The union of
these two paths is a plane zig-zag Hamiltonian path, as claimed.

While the edge set of any complete geometric graph can be covered by plane
zig-zag (Hamiltonian) paths, the minimum number of  
(not necessarily Hamiltonian) plane zig-zag paths required for covering the edge set of $K_n[A]$ is not known.

\begin{problem}
    What is the smallest number $z=z(n)$ such that the edge set of every
    complete geometric graph on $n$ vertices can be covered by $z$ plane zig-zag paths? 
\end{problem}

\end{document}